\documentclass[12pt]{article}
\usepackage{setspace}
\usepackage{amsmath,amssymb}
\usepackage{mathtools}
\usepackage{mathrsfs}
\usepackage{geometry}
\usepackage{enumitem}
\usepackage{environ}
\usepackage{setspace}
\usepackage{titlesec}
\usepackage[all,cmtip]{xy}
\usepackage{tikz}
\usetikzlibrary{matrix,arrows}

\usepackage[colorlinks=true,citecolor=blue]{hyperref}
\usepackage[amsthm,thmmarks]{ntheorem}

\NewEnviron{prf}[1][]{\begin{proof}[\bf #1Proof]\BODY\end{proof}}{}
\NewEnviron{slt}[1][]{\begin{proof}[\bf #1	解]\BODY\end{proof}}{}
\newtheorem{definition}{Definition}[section]
\newtheorem{theorem}[definition]{Theorem}
\newtheorem{lemma}[definition]{Lemma}
\newtheorem{proposition}[definition]{Proposition}
\newtheorem{remark}[definition]{Remark}

\newtheorem{notation}[definition]{Notation}

\newtheorem*{DML}{Dynamical Mordell--Lang Conjecture (DML Conjecture)}

\newcommand\address[1]{#1}
\newcommand\email[1]{\emph{Email address}: #1}
  
\setlist[enumerate,1]{label=(\roman*)}
\setlist[enumerate,2]{label=(\alph*)}

\title{\textbf{On the DML(1) property for regular endomorphisms of affine spaces: the $\mathbb{G}_m$-case}}
\author{She Yang\quad and\quad Aoyang Zheng}
\date{}

\begin{document}
\begin{spacing}{1.25}

\maketitle

\begin{abstract}
Let $f$ be a regular endomorphism of $\mathbb{A}_{\mathbb{C}}^N$ and let $C\subseteq\mathbb{A}_{\mathbb{C}}^N$ be an irreducible curve. Suppose $C$ has an infinite intersection with the $f$-orbit of a point $x\in\mathbb{A}^N(\mathbb{C})$. Then the normalization of $C$ is isomorphic to either $\mathbb{A}^1$ or $\mathbb{G}_m$. We prove that $C$ is $f$-periodic in the latter case, as expected by the dynamical Mordell--Lang conjecture.
\end{abstract}

\section{Introduction}

In this article, the base field is $\mathbb{C}$ unless otherwise stated. As a matter of convention, every variety is assumed to be integral, but the closed subvarieties can be reducible. Let $X$ be a variety and let $f$ be an endomorphism of $X$. For a point $x\in X$, the orbit $\mathcal{O}_{f}(x)$ is the set $\{f^{n}(x)\mid n\in\mathbb{N}\}$. We denote $\mathbb{N}=\mathbb{Z}_{+}\cup\{0\}$.

The dynamical Mordell--Lang conjecture is one of the core problems in the field of arithmetic dynamics. It was proposed by Ghioca and Tucker in \cite{GT09} and can be stated as follows.

\begin{DML}
Let $f$ be an endomorphism of a quasi-projective variety $X$. Let $V$ be a positive-dimensional irreducible closed subvariety of $X$ and let $x\in X(\mathbb{C})$ be a point. Suppose $\mathcal{O}_f(x)\cap V$ is Zariski dense in $V$. Then $V$ is $f$-periodic, i.e. we have $f^n(V)\subseteq V$ for some positive integer $n$.
\end{DML}

Here, we adapt the ``geometric formulation" of the DML conjecture, while in the literature people tend to state the DML conjecture in its ``arithmetic form" which asserts that the return set of an orbit into a closed subvariety must be a finite union of arithmetic progressions. It is not hard to prove the equivalence of these two formulations. For example, see \cite[Subsection 3.1.3]{BGT16}.

There is an extensive literature on various cases of the DML conjecture. Two significant cases are as follows.

\begin{enumerate}
\item
If $f$ is an \'etale endomorphism of $X$, then the DML conjecture holds. See \cite{Bel06} and \cite[Theorem 1.3]{BGT10}.
\item
If $X = \mathbb{A}^2$, then the DML conjecture holds. See \cite{Xie17} and \cite[Theorem 4]{Xie}.
\end{enumerate}

One can consult \cite{BGT16,Xie} and the references therein for further known results.

In this paper, we mainly study the DML conjecture for curves. So we recall the following notion introduced in \cite[Definition 3]{Xie}.

\begin{definition}\label{DML1}
For a quasi-projective variety $X$ and an endomorphism $f$ of $X$, we say $(X,f)$ satisfies the \emph{DML(1) property} if for every irreducible closed subcurve $C\subseteq X$ and every point $x\in X(\mathbb{C})$, we have $C$ is $f$-periodic once $\mathcal{O}_f(x)\cap C$ is infinite.
\end{definition}

Here ``1" stands for the dimension of the closed subvariety, and we also adapt the geometric formulation.

We study the DML(1) property of regular endomorphisms of affine spaces.

\begin{definition}\label{regendo}
A \emph{regular endomorphism} of $\mathbb{A}^N$ is a dominant endomorphism $f$ that can be extended to an endomorphism of $\mathbb{P}^N$. Here, we fix the standard coordinate system $[x_0,\dots,x_N]$ on $\mathbb{P}^N$ and naturally regard $\mathbb{A}^N$ as the open subset $\{x_0\neq0\}$ in it. Therefore, the regular endomorphism $f$ can be written as
$$
f(x_1,\dots,x_N)=(f_1(x_1,\dots,x_N)+g_1(x_1,\dots,x_N),\dots,f_N(x_1,\dots,x_N)+g_N(x_1,\dots,x_N)),
$$
in which $f_1(x_1,\dots,x_N),\dots,f_N(x_1,\dots,x_N)$ are homogeneous polynomials of degree $d\geq1$ that have no nonzero common zeros and $g_1(x_1,\dots,x_N),\dots,g_N(x_1,\dots,x_N)$ are polynomials of degree $<d$.
\end{definition}

The concept of regular endomorphisms was introduced in \cite{BJ00} and had been studied in several previous works. For example, please see \cite[Subsections 1.4--1.5]{Xie24}, \cite{DFR}, \cite[Subsection 1.2]{Zhong}, and \cite[Subsection 1.2]{JXZ}. The study of the DML conjecture for regular endomorphisms goes back to Xie's groundbreaking work \cite{Xie17}. See also \cite{YZ}.

Now we state our main result.

\begin{theorem}\label{mainthm}
Let $f$ be a regular endomorphism of $\mathbb{A}^N$. Let $C\subseteq\mathbb{A}^N$ be an irreducible closed subcurve and let $x\in\mathbb{A}^N(\mathbb{C})$ be a point. Suppose $\mathcal{O}_f(x)\cap C$ is infinite. Then the normalization of $C$ is isomorphic to either $\mathbb{A}^1$ or $\mathbb{G}_m$. Moreover, $C$ is $f$-periodic in the latter case.
\end{theorem}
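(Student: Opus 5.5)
We take $x$ with infinite orbit and $C$ not $f$-periodic, and derive a contradiction in the $\mathbb{G}_m$ case. The first assertion comes from Siegel's theorem together with the height theory of regular endomorphisms. We may replace $f$ by an iterate and $x$ by a point of its orbit; this changes neither the hypothesis nor the conclusion. If $\mathcal{O}_f(x)$ is finite, an infinite intersection is impossible, so $x$ is not preperiodic.

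Choose a finitely generated subring $R\subseteq\mathbb{C}$ over which $f$, $C$ and $x$ are defined. Then every $f^n(x)$ lies in $C(R)$, so $C$ contains infinitely many $R$-integral points. By Siegel's theorem in its finitely generated form (Lang), the normalization $\tilde C$ of $C$ has genus $0$ and at most two places at infinity. Hence $\tilde C\cong\mathbb{A}^1$ or $\mathbb{G}_m$. Note that $C\subseteq\mathbb{A}^N$ is affine, so at least one place lies at infinity.

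For the $\mathbb{G}_m$ case, write the normalization map as $\nu\colon\mathbb{G}_m\to C$, $t\mapsto (P_1(t),\dots,P_N(t))$, with Laurent polynomials $P_i$. The two places at infinity are $t=0$ and $t=\infty$. The key input is that $f$ extends to $\mathbb{P}^N$ with $f^{-1}(H_\infty)=H_\infty$, where $H_\infty$ is the hyperplane at infinity. This gives two things.
\begin{enumerate}
\item A Weil height with $h(f(y))=d\,h(y)+O(1)$, so $h(f^n(x))$ grows like $d^n$.
\item The points of $\overline{C}\cap H_\infty$, i.e. the branches at $t=0$ and $t=\infty$, record directions; $f$ acts on $H_\infty\cong\mathbb{P}^{N-1}$ through the homogeneous part $(f_1,\dots,f_N)$.
\end{enumerate}
Write $f^n(x)=\nu(t_n)$ with $t_n$ in a finitely generated group of $S$-units of $\mathrm{Frac}(R)$. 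Comparing heights and using Siegel's theorem on the curves $\{(y,f(y))\}$, I would show the following. For infinitely many $n$, both $f^n(x)$ and $f^{n+1}(x)$ lie on $C$ and approach the same infinite branch. This forces $f(C)$ and $C$ to share many points, since $f(C)\cap C$ contains infinitely many orbit points $f^{n+1}(x)$. If $f(C)\neq C$, the intersection $f(C)\cap C$ is finite. So one first reduces, via a pigeonhole argument, to showing that infinitely many consecutive pairs $f^n(x),f^{n+k}(x)$ lie in $C$ for a fixed $k$. Equivalently, the return set $\{n: f^n(x)\in C\}$ has bounded gaps along a subsequence.

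The main obstacle is exactly this gap control. In the $\mathbb{A}^1$ case it may fail (e.g. lines hit sparsely), which is why only $\mathbb{G}_m$ is claimed. My plan is to use the $S$-unit structure. The map $n\mapsto t_n$ is constrained because $P_i(t_{n'})$ is a polynomial in $P_j(t_n)$ whenever $f^{n'-n}$ sends $\nu(t_n)$ to $\nu(t_{n'})$. Heights give $h(t_{n'})\approx d^{\,n'-n}h(t_n)$. On the other hand, the $S$-unit equations coming from $P_i(t_{n'})=(f^{n'-n})_i(\nu(t_n))$ have, by the Evertse--Schlickewei--Schmidt unit equation theorem, only finitely many nondegenerate solutions in each bounded-degree family. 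Degenerate solutions force a monomial relation $t_{n'}=\zeta\, t_n^{\pm m}$. Such a relation defines a self-map $t\mapsto\zeta t^{\pm m}$ of $\mathbb{G}_m$ that is compatible with $f^{n'-n}$ on infinitely many points of $C$. Zariski density of these points in $C$ then gives $f^{n'-n}(C)=C$. I expect the bulk of the work to be making the unit-equation step uniform in $n'-n$, using the leading-order behaviour at the two branches (the homogeneous part $(f_1,\dots,f_N)$) to bound the relevant degrees.
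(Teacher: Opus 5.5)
Your first part (Lang's form of Siegel's theorem, giving genus $0$ and at most two places at infinity) is correct and is what the paper does. The periodicity part has a genuine gap.

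The ``gap control'' you isolate is not an auxiliary step. It is equivalent to the conclusion: a gap $k$ occurring infinitely often gives $|f^k(C)\cap C|=\infty$, hence $f^k(C)=C$, and conversely. Nothing in your sketch produces it.
\begin{enumerate}
\item Your intermediate claim that infinitely many consecutive points $f^n(x),f^{n+1}(x)$ lie on $C$ is false in general. Take $f(x,y,z)=(x^2,z^2,y^2)$ and $C=\{(t,t^{-1},1)\}$. Then $f^2(C)=C\neq f(C)$, and the orbit of $(2,1/2,1)$ meets $C$ at every even time but never at two consecutive times.
\item Applying Siegel to the graph $\{(y,f(y)):y\in C\}$ says nothing, since that graph is isomorphic to $C$.
\item The unit-equation plan does not supply the missing uniformity. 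For fixed $k$, the pairs $(t_n,t_{n+k})$ lie on the fixed curve $Z_k=\{(s,t)\in\mathbb{G}_m^2:\nu(t)=f^k(\nu(s))\}$, which is already finite when $f^k(C)\neq C$. So Evertse--Schlickewei--Schmidt adds nothing for a fixed gap. Its bounds grow with the number of monomials, which is of order $d^k$, so they give nothing uniform in $k$.
\item A vanishing subsum yields a relation $t_{n'}^{a}=c\,t_n^{b}$ for one pair only. Turning it into a self-map of $\mathbb{G}_m$ compatible with $f^{n'-n}$ at infinitely many points of $C$ again requires infinitely many pairs with the same gap and the same relation. That is circular.
\item The case $\deg f=1$ needs separate treatment, since there $h(f(y))=d\,h(y)+O(1)$ gives no growth. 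The paper uses that $f$ is then an automorphism and cites Bell--Ghioca--Tucker.
\end{enumerate}

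The missing idea is to stop tracking individual orbit points and work with curves. Since $f$ is finite, the hypothesis gives a backward chain $C_{-n}$ with $f(C_{-n})=C_{-n+1}$. Each $C_{-n}$ meets the orbit infinitely often and has normalization $\mathbb{G}_m$ (after a finite extension of $K$), and it suffices to show only finitely many distinct $C_{-n}$ occur. The paper proceeds as follows:
\begin{enumerate}
\item Self-maps of $\mathbb{G}_m$ are monomial, so the parametrizations satisfy $f(P_n(t))=P_{n-1}(c_nt^{m_n})$. After rescaling, $Q_n(t)=P_n(\lambda_nt)$ satisfies $f^n(Q_n(t))=P_0(t^{M_n})$.
\item A regular endomorphism is expanding at every place, with one $M_K$-constant independent of $n$. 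This bounds $Q_n$ on the unit circle (resp.\ in Gauss norm) uniformly in $n$.
\item That bound controls the heights of all coefficients. Via Parseval, the product formula and Jensen, it also bounds the number of monomials.
\item The supports then evolve under a finite set of partial linear isometries of $(\mathbb{R}^L,\|\cdot\|_\infty)$. These generate a finite semigroup, so only finitely many supports occur, and Northcott finishes.
\end{enumerate}
This is where the uniformity that your point-by-point approach lacks comes from.
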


Therefore, in order to prove the DML(1) property for regular endomorphisms, our result solves the $\mathbb{G}_m$-case and leaves the $\mathbb{A}^1$-case open. Through the approach in \cite{Xie17} for endomorphisms of $\mathbb{A}^2$, the $\mathbb{G}_m$-case is technically more complicated than the $\mathbb{A}^1$-case. However, currently our approach cannot deal with the $\mathbb{A}^1$-case.

The first part of Theorem \ref{mainthm} is an easy consequence of Siegel's theorem on integral points Theorem \ref{siegel}. To prove the ``moreover" part, our first step is the same as \cite[p. 6, Step 1]{YZ}, which is learned form \cite[Section 8]{Xie17}. This step reduces the problem to the following proposition.

Throughout this article, an \emph{arithmetic function field} means a finitely generated field extension over $\mathbb{Q}$.

\begin{proposition}\label{mainprop}
Let $K\subseteq\mathbb{C}$ be an arithmetic function field. Let $f$ be a regular endomorphism of $\mathbb{A}^N$ with $K$-coefficients. Suppose $\deg(f)>1$. Let $(C_{-n})_{n\geq0}$ be a sequence of irreducible closed subcurves of $\mathbb{A}_{K}^N$ which satisfies $f(C_{-n})=C_{-n+1}$ for every $n\geq1$. Suppose the normalization of each $C_{-n}$ is isomorphic to $\mathbb{G}_{m,K}$. Then there are only finitely many different curves in $(C_{-n})_{n\geq0}$. Hence all of them are $f$-periodic.
\end{proposition}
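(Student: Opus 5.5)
The plan is to control the backward sequence $(C_{-n})$ by bounding the degrees of the curves $C_{-n}$. If the degrees are bounded, finiteness should come from an arithmetic finiteness argument over $K$. Periodicity is then formal: once the set $\{C_{-n}\}$ is finite, say $C_{-a}=C_{-b}$ with $a<b$, we get $f^{b-a}(C_{-a})=C_{-a}$. Every $C_{-n}$ with $n\le a$ is an image of $C_{-a}$, and every $C_{-n}$ with $n>a$ satisfies $f^{n-a}(C_{-n})=C_{-a}$ and lies in the finite set, so each is periodic.

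\textbf{Step 1: behaviour at infinity.} Since $f$ extends to $\mathbb{P}^N$ with $f^{-1}(H_\infty)=H_\infty$, where $H_\infty=\{x_0=0\}$, the map $f$ restricts to $f_\infty=[f_1:\dots:f_N]$ on $H_\infty\cong\mathbb{P}^{N-1}$. Let $\overline{C_{-n}}$ be the closure in $\mathbb{P}^N$. Because the normalization of $C_{-n}$ is $\mathbb{G}_m$, the normalization of $\overline{C_{-n}}$ is $\mathbb{P}^1$ with at most two points over $H_\infty$. As $C_{-n}$ is affine, it has either one or two places at infinity, and the places of $\overline{C_{-n}}$ at infinity map onto those of $\overline{C_{-n+1}}$. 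Since $f$ is finite, $f^*H_\infty=dH_\infty$, which gives $\deg C_{-n}\cdot d=\deg(f|_{C_{-n}})\cdot\deg C_{-n+1}$. Counting ramification at the (at most two) places at infinity of the induced map $\mathbb{P}^1\to\mathbb{P}^1$ shows it is essentially $z\mapsto z^{\pm k}$ in suitable coordinates. This is because a map $\mathbb{G}_m\to\mathbb{G}_m$, which extends to $\mathbb{P}^1$ with the boundary mapping to the boundary, is a monomial up to scaling, at least when the boundary has two points.

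\textbf{Step 2: degree and height bounds.} I would parametrize $C_{-n}$ by Laurent polynomials $\phi_n(t)\in K_n[t,t^{-1}]^N$ with $f\circ\phi_n(t)=\phi_{n-1}(\lambda_n t^{\pm k_n})$. Comparing leading terms, the pair of points at infinity of $C_{-n}$ is a backward $f_\infty$-orbit of the pair for $C_{-n+1}$. The leading coefficient vectors are determined by homogeneous data, and the degree in $t$ grows like $\deg(\phi_{n-1})\,k_n/d$. The aim is to show $k_n=d$ for all large $n$, by a ramification and Riemann–Hurwitz count on the map $\mathbb{P}^1\to\mathbb{P}^1$, which is totally ramified over the boundary. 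This would give $\deg C_{-n}$ bounded, i.e.\ $\deg C_{-n}\le\deg C_0$. Then all $C_{-n}$ lie in a fixed Chow variety of bounded degree. To get finiteness, I would use a height argument over the arithmetic function field $K$, in the style of Xie's $\mathbb{A}^2$ proof. The key point is that the $C_{-n}$ are defined over finite extensions of $K$, whereas we need them over $K$ with bounded degree. One would show that the points at infinity $C_{-n}\cap H_\infty$ form backward $f_\infty$-orbits and have bounded height via a Weil height inequality $h(f_\infty(y))=d\,h(y)+O(1)$. This forces $h\to$ bounded, and Northcott-type finiteness over $K$ (using a model over a finitely generated $\mathbb{Z}$-algebra and a suitable Moriwaki height) then yields only finitely many possible boundary data. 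Once the boundary data is fixed, the coefficients of $\phi_n$ are recovered from $\phi_{n-1}$ up to finitely many choices.

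\textbf{Main obstacle.} The hardest step, I expect, is the last one: passing from bounded degree to finiteness of the curves. Bounded degree alone only places the $C_{-n}$ in a positive-dimensional Chow family, and the backward preimage relation is multivalued. The concrete plan is to work with the curves' coefficient vectors as points of the Chow variety. Pulling back by $f$ induces a correspondence which, by the degree relation, expands heights by a factor $d>1$: roughly $h(C_{-n+1})\ge d\,h(C_{-n})-O(1)$. This forces $h(C_{-n})$ to be bounded. Combined with the fact that the $C_{-n}$ are defined over $K$ (so of bounded field degree), Northcott's property for arithmetic function fields would then give finiteness. Establishing the height expansion using the $\mathbb{G}_m$-structure, namely the two-point boundary and the monomial form of $f|_{C_{-n}}$, is where I expect the real work to lie.
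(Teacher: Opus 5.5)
\textbf{There is a genuine gap.} Nothing in your plan actually bounds $\deg C_{-n}$. Your Chow-variety step and your height-expansion step both depend on such a bound.

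\textbf{Step 2 cannot supply the bound.} The lifted maps $t\mapsto c_nt^{k_n}$ are totally ramified over $0$ and $\infty$ and unramified elsewhere, for \emph{every} $k_n$. So Riemann--Hurwitz holds identically and constrains nothing. The only relation between $k_n$ and $d$ is the projection formula $d\deg C_{-n}=k_n\deg C_{-n+1}$. Hence ``$k_n=d$ eventually'' just restates ``the degrees are eventually constant''.

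\textbf{The claim is in fact false.} Take the regular map $f(u,v,w)=(u^2-2,\,w^2,\,v^2-u-2)$ of degree $2$. One checks that $f(t^2+t^{-2},\,t+t^{-1},\,t^2)=(s+s^{-1},\,s,\,0)$ with $s=t^4$. So the degree-$4$ curve $D$ parametrized by $t\mapsto(t^2+t^{-2},t+t^{-1},t^2)$ maps with $k=4$ onto the degree-$2$ curve $D'$ parametrized by $s\mapsto(s+s^{-1},s,0)$. Both have normalization $\mathbb{G}_m$. Also $g(x,y,z)=(x^2-2,\,x+z^2,\,y^2)$ maps $D'$ birationally back onto $D$.

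Now let $F(X,Y)=(g(Y),f(X))$, a regular map of degree $2$ on $\mathbb{A}^6$, and let $q$ be a fixed point of $f\circ g$. The curves $\{g(q)\}\times D'$ and $D\times\{q\}$ form a $2$-cycle. Taking $C_0=\{g(q)\}\times D'$, the backward sequence has $\deg C_{-1}=4>2=\deg C_0$. Along it $k_n$ alternates between $4$ and $1$ and never equals $d=2$.

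\textbf{The same problem affects your last step.} Canonical heights of curves satisfy $\hat h(C_{-n+1})=(d^2/k_n)\,\hat h(C_{-n})$. Only the ratio $\hat h(C)/\deg C$ scales by exactly $d$. So your inequality $h(C_{-n+1})\ge d\,h(C_{-n})-O(1)$ again presupposes control of $k_n$, i.e.\ of degrees.

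\textbf{What is missing is an arithmetic mechanism for degree control.} By the paper's remark, the proposition fails over finitely generated fields of positive characteristic. In the paper's proof, the archimedean places enter precisely at this step.

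The paper's argument runs as follows.
\begin{enumerate}
\item It normalizes $Q_n(t)=P_n(\lambda_nt)$ so that $f^n(Q_n(t))=P_0(t^{M_n})$.
\item The uniform expansion of $f$ at every place (Lemma \ref{explem}) bounds the Gauss norms of $Q_n$ at finite places and $\sup_{|t|=1}\|Q_n(t)\|$ at archimedean places. This gives bounded heights of the coefficients.
\item Parseval on the unit circle, the product formula and Jensen's inequality together bound the \emph{number of monomials} of $P_n$ by a constant $L$.
\item The support vectors then evolve under finitely many partial linear isometries of $(\mathbb{R}^L,\|\cdot\|_\infty)$. These generate a finite semigroup because the unit ball is a polytope. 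Since each support has gcd $1$, only finitely many supports occur, which bounds the degrees.
\item Only then does Northcott, applied to the coefficients of $P_n(t/\beta_n)$ with $\beta_n\in K^\times$, give finiteness.
\end{enumerate}

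Once degrees are bounded, your boundary-data and canonical-height ideas could plausibly replace that final Northcott step. They do not address the step where the real difficulty lies.
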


\begin{remark}
It is interesting that the conclusion no longer holds if $K$ is a finitely generated field of positive characteristic (i.e. function fields over $\mathbb{F}_p$). We leave the construction of counterexamples to the interested readers.
\end{remark}

The structure of this article is as follows. We make some preparations in Section \ref{Sec2}, and then prove the main results in Section \ref{Sec3}.

\textbf{Statement on A.I. use.} The authors notice that Theorem \ref{mainthm} can be reduced to Proposition \ref{mainprop}, and it is likely to prove Proposition \ref{mainprop} in such a $\mathbb{G}_m$-case. This is because $\mathbb{G}_m$ is too rigid --- its self-maps are monomials. The splendid, essentially elementary proof of Proposition \ref{mainprop} is learned from ChatGPT 6 Astra. We clear up its proof, rearrange the material, and type the manuscript.

\section{Preparations}\label{Sec2}

In subsection \ref{sec2.1}, we recall the Arakelov theory for arithmetic function fields. Then in subsection \ref{expand}, we use this language of Arakelov geometry to show that regular endomorphisms are ``expanding". In subsection \ref{sec2.2}, we prove a linear algebra lemma that will be used later.

\subsection{Arakelov theory for arithmetic function fields}\label{sec2.1}

In the following, we closely follow \cite[Subsections 3A--3C]{Voj21}.

Let $K$ be an arithmetic function field. Let $d=\mathrm{tr.deg}_{\mathbb{Q}}(K)$. Let $B$ be a normal arithmetic variety, whose function field is isomorphic to $K$. Let $\mathcal{M}$ be a big and nef smoothly metrized line sheaf on $B$. Then we get a big polarization $M=(B;\mathcal{M})$. This is an abbreviation of $(B;\mathcal{M},\dots,\mathcal{M})$ in which there are $d$ copies of $\mathcal{M}$. These data are fixed once and for all.

The polarization $M$ gives a set of absolute values $M_{K}=M_{K}^{\infty}\sqcup M_{K}^{0}$ of $K$ in the following way.

Let $B^{(1)}$ be the set of prime Weil divisors on $B$. The set $M_{K}^{0}$ of non-archimedean absolute values will have a bijection with $B^{(1)}$. For every $Y\in B^{(1)}$, let $h_M(Y)=c_1(\mathcal{M}|_{Y})^d$ be the arithmetic intersection number. As $h_M(Y)\geq0$, the formula $\|x\|_Y=\mathrm{e}^{-h_M(Y)\mathrm{ord}_Y(x)}$ for $x\in K^{\times}$ defines a non-archimedean absolute value of $K$. This finishes the description of $M_K^{0}$.

The set $M_{K}^{\infty}$ of archimedean absolute values will have a bijection with $B(\mathbb{C})^{\mathrm{gen}}:=B(\mathbb{C})\setminus\bigcup\limits_{Y\in B^{(1)}}Y(\mathbb{C})$. For every $b\in B(\mathbb{C})^{\mathrm{gen}}$, the formula $\|x\|_b=|x(b)|$ for $x\in K$ defines an archimedean absolute value of $K$. This finishes the description of $M_K^{\infty}$.

Now we make $M_K$ into a measure space. Let $\mu_{\mathrm{fin}}$ be the counting measure on $M_K^0$, and let $\mu_{\infty}$ be the Lebesgue measure on $B(\mathbb{C})$ associated to the semipositive $(d,d)$-form $c_1(\|\cdot\|_{\mathcal{M}})^d$. Combining $\mu_{\mathrm{fin}}$ and $\mu_{\infty}$, we get a measure $\mu$ on $B(\mathbb{C})\sqcup M_K^0\supseteq M_K$. Notice that $B(\mathbb{C})\setminus M_{K}^{\infty}$ has measure zero and $M_K^{\infty}$ has positive finite measure.

In this setting, we have a \emph{product formula} which says that $\int_{M_K}\log\|x\|_vd\mu(v)=0$ for every $x\in K^{\times}$. See \cite[Section 3.2]{Mor00}.

~

Next, for every finite field extension $K'$ over $K$, we want to get the induced measure space $M_{K'}$ of absolute values on $K'$. To this end, we recall the induced big polarization $M'=(B';\mathcal{M}')$ of $K'$ \cite[Definition 3.9]{Voj21}.

Let $B'$ be the relative normalization of $B$ in $\mathrm{Spec}(K')$. Then $B'$ is also a normal arithmetic variety and the associated map $\pi\colon B'\to B$ is a finite surjective morphism of degree $[K':K]$. Let $\mathcal{M}'=\pi^*\mathcal{M}$. Then $\mathcal{M}'$ is a big and nef smoothly metrized line sheaf on $B'$, and hence we get the big polarization $M'=(B';\mathcal{M}')$. Then $M'$ induces the measure space $M_{K'}$ of absolute values on $K'$ in the sense as above.

For $v\in M_{K}$ and $w\in M_{K'}$, we can talk about ``$w$ lies over $v$" and write $w\mid v$ in the natural sense. See \cite[Definition 3.10]{Voj21}. For each $w\in M_{K'}$ lying over $v$, we introduce the indices $n_{w/v}$ following \cite[Proposition 3.11]{Voj21}.
\begin{enumerate}
\item
If $v$ is archimedean, let $n_{w/v}=1$.
\item
If $v$ corresponds to the prime Weil divisor $Y\subseteq B$ and $w$ corresponds to the prime Weil divisor $Y'\subseteq B'$, then we have $\pi(Y')=Y$. Let $e_{w/v}$ be the ramification index of the extension $\mathcal{O}_{B',Y'}/\mathcal{O}_{B,Y}$ between DVRs and let $f_{w/v}=[K(Y'):K(Y)]$. We put $n_{w/v}=e_{w/v}f_{w/v}$.
\end{enumerate}

Then the following holds \cite[Proposition 3.11]{Voj21}.
\begin{enumerate}
\item
Let $i\colon K\hookrightarrow K'$ be the field injection. For each $w\in M_{K'}$ lying over $v$ and every $x\in K$, we have $\|i(x)\|_w=\|x\|_v^{n_{w/v}}$.
\item
For every $v\in M_K$, we have $\sum\limits_{w\mid v}n_{w/v}=[K':K]$.
\end{enumerate}

~

The machinery above of inducing absolute values on finite field extensions is compatible with towers of extensions. Hence we can define heights for elements in $\overline{K}$.

\begin{definition}\label{height}
The \emph{height function} $h\colon\overline{K}\to\mathbb{R}_{\geq0}$ is defined by
$$h(x)=\frac{1}{[K(x):K]}\int_{M_{K(x)}}\log^+\|x\|_wd\mu_{K(x)}(w).$$
\end{definition}

Here and in the sequel, we denote $\log^{+}x=\max\{0,\log x\}$. Also, in the definition above, we may change the field $K(x)$ into any finite extension of it and get the same value.

Using Arakelov intersection theory, we can get the height machinery in a more general setting. See \cite[Section 3.3]{Mor00} (and also \cite[Definition 3.14]{Voj21}). Please see \cite[Proposition 3.3.2]{Mor00} for the connection between heights defined by arithmetic intersection numbers and Definition \ref{height} above.

The following theorem is the main result in Moriwaki's height machinery.

\begin{theorem}(Northcott's finiteness property; \cite[Theorem 4.3]{Mor00})\label{northcott}
For all $C>0$ and $n\in\mathbb{Z}_+$, the set $\{x\in\overline{K}\mid h(x)\leq C\text{ and }[K(x):K]\leq n\}$ is finite.
\end{theorem}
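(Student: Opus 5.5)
The plan is to follow Moriwaki's strategy: first reduce to points of $K$ itself by passing to minimal polynomials, then prove finiteness for $K$-rational points using the bigness of $\mathcal{M}$. The second step is where the real work lies.

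\textbf{Reduction to $K$-rational points.} Let $x\in\overline{K}$ with $h(x)\le C$ and $[K(x):K]\le n$. Let $P(T)=T^m+a_1T^{m-1}+\dots+a_m$ be its minimal polynomial over $K$, so $m\le n$, and let $L$ be the normal closure of $K(x)$ over $K$. Write $x_1,\dots,x_m$ for the conjugates of $x$. For every $w\in M_L$ we have the elementwise bound
$$\log^+\|a_i\|_w\le \sum_{j}\log^+\|x_j\|_w+\epsilon_w\, m\log 2,$$
where $\epsilon_w=1$ for archimedean $w$ and $\epsilon_w=0$ otherwise.
\begin{itemize}
\item The archimedean term comes from the binomial count of the terms in an elementary symmetric function.
\item The non-archimedean case uses the ultrametric inequality.
\end{itemize}
Integrate this over $M_L$ and use that all conjugates have the same height; this follows because $\mathrm{Gal}(L/K)$ permutes the places $w\mid v$ preserving $n_{w/v}$ and the measure. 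The result is
$$h(a_i)\le m\,h(x)+m\log 2\cdot\mu(M_K^\infty).$$
So the coefficients lie in a set of elements of $K$ of bounded height. Once that set is known to be finite, only finitely many polynomials $P$ occur, hence only finitely many $x$. It therefore suffices to prove the theorem for $n=1$.

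\textbf{Finiteness in $K$.} Let $x\in K^\times$ with $h(x)\le C$. Consider the rational map $\phi_x=[1:x]\colon B\dashrightarrow\mathbb{P}^1_{\mathbb{Z}}$. By \cite[Proposition 3.3.2]{Mor00}, $h(x)$ equals, up to a bounded error, the arithmetic intersection number $\widehat{\deg}\big(\phi_x^*\overline{\mathcal{O}(1)}\cdot c_1(\mathcal{M})^d\big)$, where $\mathcal{O}(1)$ carries the Fubini--Study metric. Unwinding the definition, the non-archimedean contribution is
$$\sum_{Y} h_M(Y)\,\max\{0,-\mathrm{ord}_Y x\},$$
and the archimedean contribution is $\int\log^+|x|\,d\mu_\infty$. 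Both are nonnegative, so each is bounded by $C$.

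I would then use bigness. Since $\mathcal{M}$ is big, after passing to a generically finite modification I may assume that some positive multiple of $\mathcal{M}$ is the sum of an arithmetically ample hermitian line sheaf $\overline{\mathcal{A}}$ and an effective divisor $\mathcal{E}$ with a small-norm section. Then $h_M(Y)$ is bounded below by a positive constant times $\deg_{\mathcal{A}}(Y)$ for every $Y\not\subseteq\mathrm{Supp}\,\mathcal{E}$. Consequently:
\begin{itemize}
\item the polar divisor $D=(x)_\infty$ has bounded $\mathcal{A}$-degree away from $\mathcal{E}$;
\item the pole orders along the finitely many components of $\mathcal{E}$ are controlled using the archimedean bound together with the product formula.
\end{itemize}
Hence $x$ lies in $H^0(B,\mathcal{O}(D_0))$ for a single fixed effective divisor $D_0$. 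This is a finitely generated $\mathbb{Z}$-module, hence a lattice in a finite-dimensional real space. Finally, the bound on $\int\log^+|x|\,d\mu_\infty$ bounds the sup-norm of $x$ on a set of positive measure. Since $x$ is a section of a fixed line sheaf, this bounds a norm on the lattice, so only finitely many $x$ remain.

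\textbf{Main obstacle.} The hard step is the last paragraph: passing from "bounded height" to "lies in a fixed lattice of sections." The difficulty is that $h_M(Y)$ may vanish for some divisors $Y$, since $\mathcal{M}$ is only big and nef. I would first try Moriwaki's device of writing a multiple of $\overline{\mathcal{M}}$ as ample plus effective, and then handle the finitely many exceptional components of $\mathcal{E}$ separately by the product-formula argument.
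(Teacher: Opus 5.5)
Your reduction to $n=1$ via minimal polynomials is correct. For $x\in K$, the decomposition $h(x)=\sum_Y h_M(Y)\max\{0,-\mathrm{ord}_Y x\}+\int\log^+|x|\,d\mu_\infty$ is simply Definition~\ref{height}. The gap is the step you flag yourself, and your proposed remedy fails there.

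Let $E$ be a component of $\mathcal{E}$ with $h_M(E)=0$. Such components do occur. Take $B$ the blow-up of $\mathbb{P}^1_{\mathbb{Z}}$ at a closed point of the fibre over $p$, let $\overline{\mathcal{M}}$ be the pull-back of an ample hermitian line sheaf, and let $E$ be the exceptional curve. Then $E\subseteq\mathrm{Supp}\,\mathcal{E}$, because $\mathcal{A}\cdot E>0=\mathcal{M}\cdot E$. The place attached to $E$ is the trivial absolute value. So $\mathrm{ord}_E(x)$ enters neither $h(x)$ nor the product formula $\sum_Y h_M(Y)\,\mathrm{ord}_Y(x)=\int\log|x|\,d\mu_\infty$. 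Since $E$ is vertical, the archimedean integral does not see it either. Nothing in your argument bounds $-\mathrm{ord}_E(x)$, so you never reach a fixed divisor $D_0$. In the example the bound comes from geometry instead: $-\mathrm{ord}_E(x)$ is at most the multiplicity, at the blown-up point, of the polar divisor of $x$ on $\mathbb{P}^1_{\mathbb{Z}}$.

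The missing idea is to compare heights globally rather than divisor by divisor. On a modification where $\phi_x$ is a morphism, $h(x)$ equals $\widehat{\deg}\bigl(\phi_x^*\overline{\mathcal{O}(1)}\cdot\widehat{c}_1(\overline{\mathcal{M}})^d\bigr)$ up to a bounded error. Now substitute $k\overline{\mathcal{M}}=\overline{\mathcal{A}}+\overline{\mathcal{E}}$ one factor at a time. Every term containing $\overline{\mathcal{E}}$ is $\ge 0$: $\overline{\mathcal{E}}$ has a section of sup-norm $\le 1$, and the other factors $\phi_x^*\overline{\mathcal{O}(1)}$, $\overline{\mathcal{M}}$, $\overline{\mathcal{A}}$ are nef. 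This gives $k^d h_{\overline{\mathcal{M}}}(x)\ge h_{\overline{\mathcal{A}}}(x)-O(1)$ for all $x$ at once. It reduces you to an ample polarization, where no prime divisor has height $0$. The paper gives no proof of this statement and only cites \cite[Theorem 4.3]{Mor00}. Moriwaki's argument also rests on a global comparison of this kind, with a fine polarization coming from a generically finite map to $(\mathbb{P}^1_{\mathbb{Z}})^d$. After that, bounded height can be read off from coefficients of polynomials in $t_1,\dots,t_d$.

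Even after this repair, two steps need more than you give.

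First, a bound on $\sum_Y h_{\overline{\mathcal{A}}}(Y)\max\{0,-\mathrm{ord}_Y x\}$ yields a fixed $D_0$ only if $\inf_Y h_{\overline{\mathcal{A}}}(Y)>0$ and only finitely many prime divisors satisfy $h_{\overline{\mathcal{A}}}(Y)\le C$. For vertical $Y$ this is easy, since $h_{\overline{\mathcal{A}}}(Y)=\log p\cdot\deg_{\mathcal{A}}(Y)$. For horizontal $Y$ it is a Northcott theorem for prime divisors of $B_{\mathbb{Q}}$. You need a lower bound $h_{\overline{\mathcal{A}}}(Y)\gg\deg Y_{\mathbb{Q}}$ (e.g.\ from Zhang's successive minima), together with finiteness of Chow forms of bounded degree and height. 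Bounded degree alone does not suffice, so this input has to be invoked explicitly.

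Second, in the last step the set of positive measure on which you bound $|x|$ depends on $x$, so it does not by itself bound a norm on the lattice. What you need is a uniform estimate $\inf_{\|y\|=1}\int\log|y|\,d\mu_\infty>-\infty$ on the finite-dimensional space $H^0(B_{\mathbb{R}},\mathcal{O}(D_0))$. This follows from compactness of the unit sphere and $L^1$-continuity of $y\mapsto\log|y|$. It gives $\int\log^+|x|\,d\mu_\infty\ge\mu_K(M_K^\infty)\log\|x\|-c$, and hence finiteness of the relevant lattice points.
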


Finally, we state Siegel's theorem on integral points on curves due to Lang.

\begin{theorem}\label{siegel}
Let $R\subseteq K$ be a finitely generated subring over $\mathbb{Z}$. Let $C\subseteq\mathbb{A}_K^N$ be an irreducible closed subcurve. Suppose $C(K)\cap R^N$ is infinite. Then the normalization $\widetilde{C}$ of $C$ admits an open immersion into $\mathbb{P}_K^1$ such that the complement $(\mathbb{P}_K^1\setminus\widetilde{C})_{\mathrm{red}}$ is a zero cycle of degree less than or equal to $2$.
\end{theorem}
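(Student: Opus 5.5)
This is the classical Siegel–Mahler–Lang theorem, and I would not reprove the deep Diophantine input. Instead I would reduce the statement as written to Lang's finitely generated version, and then extract the stated geometric shape of $\widetilde{C}$. \emph{Step 1 (normalization).} Let $A=K[C]$ be the coordinate ring and $\widetilde{A}$ its integral closure, which is finitely generated over $K$. Choose generators $t_1,\dots,t_m$ of $\widetilde{A}$ together with monic integral equations for them over $A$. After enlarging $R$ by finitely many elements (coefficients and denominators, which is harmless), each $t_j$ satisfies a monic equation over $R[x_1,\dots,x_N]|_C$. Away from the finitely many singular points, every point $P\in C(K)\cap R^N$ lifts uniquely to $\widetilde{P}\in\widetilde{C}(K)$. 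The values $t_j(\widetilde{P})$ are then roots of monic polynomials with coefficients in $R$, so they lie in the integral closure $R'$ of $R$ in $K$. Since $R'$ is again finitely generated over $\mathbb{Z}$ (Nagata), we obtain infinitely many $R'$-integral points on the smooth affine curve $\widetilde{C}\subseteq\mathbb{A}^m_K$.

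\emph{Step 2 (Lang's theorem).} Let $\overline{C}$ be the smooth projective completion of $\widetilde{C}$, and let $D=(\overline{C}\setminus\widetilde{C})_{\mathrm{red}}$. By Lang's extension of Siegel's theorem to finitely generated fields, infinitely many integral points force two things: $\overline{C}$ has genus $0$, and $\#D(\overline{K})\leq 2$. The two cases behind this are as follows.
\begin{enumerate}
\item If $g\geq1$, embed $\widetilde{C}$ into its Jacobian. Use the Lang–Néron Mordell–Weil theorem over $K$ together with weak Mordell–Weil to pull back by multiplication-by-$n$ isogenies; this makes the integral points approximate a point of $D$ to very high order. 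That contradicts Roth's theorem in Lang's form for finitely generated fields, with the heights of Definition \ref{height} and Northcott (Theorem \ref{northcott}) controlling the counting.
\item If $g=0$ and $\#D(\overline{K})\geq3$, pass to a finite extension over which three points at infinity are rational. The integral points then give infinitely many solutions of a unit equation $u+v=1$ in the finitely generated unit group $R''^{\times}$, which contradicts the finiteness of solutions of the unit equation.
\end{enumerate}

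\emph{Step 3 (descent to $K$).} Since $\overline{C}$ has genus $0$ and $\widetilde{C}(K)$ is infinite, in particular nonempty, $\overline{C}$ has a $K$-point, so $\overline{C}\cong\mathbb{P}^1_K$. This gives the open immersion $\widetilde{C}\hookrightarrow\mathbb{P}^1_K$, and $D$ is a reduced zero cycle whose degree equals $\#D(\overline{K})\leq2$. I expect the main obstacle to be the genus $\geq1$ case in Step 2, namely the Diophantine approximation over a finitely generated field. Here I would rely on Lang's Fundamentals of Diophantine Geometry rather than reprove it. The only genuinely new check is the bookkeeping in Step 1, which ensures that integrality survives passing to the normalization.
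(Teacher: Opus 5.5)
Your proposal is correct and follows essentially the paper's route: the paper cites the Siegel--Lang theorem over finitely generated fields (in the form of Vojta's Corollary 4.11) to get genus $0$ and at most two geometric points at infinity, and then uses a $K$-rational point to identify the smooth completion with $\mathbb{P}^1_K$. Your Step 1 transfer of integrality to $\widetilde{C}$ is harmless but unnecessary, since the cited result already works on the normalization of the projective closure. The one point you should state explicitly is that infinitely many $K$-points force $C$ to be geometrically integral, as the paper remarks; your use of the genus and of $\#D(\overline{K})$ silently relies on this.
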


The reference is \cite[Corollary 4.11]{Voj21}. Since our statement is not literally the same as in the reference, we make some brief explanations. As in the proof of \cite[Corollary 4.11]{Voj21}, we denote $X_0$ as the projective closure of $C$ and let $\pi\colon X\to X_0$ be the normalization of $X_0$. Since $C$ contains infinitely rational points, so do all the curves in here and hence they are geometrically integral. The proof of loc. cit. establishes the facts that $X$ has genus 0 and the degree of the zero cycle $(X\setminus\pi^{-1}(C))_{\mathrm{red}}$ is at most 2. Combining these informations, we know $X\cong\mathbb{P}_K^1$ \cite[Proposition 53.10.4 (0C6U)]{Stacks}, and hence the result follows because $\pi^{-1}(C)$ is naturally isomorphic to the normalization of $C$.

\subsection{Regular endomorphisms are expanding}\label{expand}

In this subsection, we let $K\subseteq\mathbb{C}$ be an arithmetic function field and let $f$ be a regular endomorphism of $\mathbb{A}^N$ with $K$-coefficients. The main result of this subsection is Lemma \ref{explem}, which describes the expanding nature of $f$ at each place of $K$. Before stating the result, we need to recall the concept of ``$M_K$-constants" \cite[Definition 3.18]{Voj21}. We fix a big polarization $M=(B;\mathcal{M})$ of $K$ to run the machinery introduced in subsection \ref{sec2.1}.

\begin{definition}\label{MKconst}
An $M_K$-\emph{constant} is a measurable, $L^1$ function from $M_K$ to $\mathbb{R}_{\geq0}$ whose support has finite measure. We denote an $M_K$-constant as $(c_v)_{v\in M_K}$.
\end{definition}

\begin{remark}\label{MKconstrmk}
\begin{enumerate}
\item
For an $M_K$-constant $(c_v)_{v\in M_K}$, we have $c_v=0$ for all but finitely many $v\in M_K^0$. The measurable and $L^1$ conditions are mainly toward the behavior of $c_v$ for $v\in M_{K}^{\infty}$.
 
\item
For every $x\in K$, the function $(\log^+\|x\|_v)_{v\in M_K}$ is an $M_K$-constant.

\item
The concept of $M_K$-constant is compatible with finite field extensions. Namely, let $K'$ be a finite extension of $K$ and let $(c_v)_{v\in M_K}$ be an $M_K$-constant. For $w\in M_K'$, let $c_w=n_{w/v}c_v$ where $v\in M_K$ is the place that $w$ lies on. Then $(c_w)_{w\in M_{K'}}$ is an $M_{K'}$-constant such that $\int_{M_{K'}}c_wd\mu_{K'}(w)=[K':K]\int_{M_K}c_vd\mu_K(v)$.

\item
Our definition is slightly different from \cite[Definition 3.18]{Voj21} that we require $M_K$-constants to take nonnegative values.
\end{enumerate}
\end{remark}

In the following, we suppose the degree of the regular endomorphism $f$ is \emph{greater than $1$}.

\begin{lemma}\label{explem}
There exists an $M_K$-constant $(c_v)_{v\in M_K}$ such that the following holds.

Let $v\in M_K$. Let $E$ be an arbitrary field extension of $K$ and $u$ be an absolute value on $E$ which extends $v$. Then for every $x=(x_1,\dots,x_N)\in E^N$, we have $\|f(x)\|_u>\|x\|_u$ when $\|x\|_u>\mathrm{e}^{c_v}$. Here $\|x\|_u$ stands for the $L^{\infty}$-norm, i.e. $\|x\|_u=\max\limits_{1\leq i\leq N}\|x_i\|_u$.
\end{lemma}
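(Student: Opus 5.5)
The plan is to show that the top-degree part $F=(f_1,\dots,f_N)$ satisfies a uniform lower bound $\|F(x)\|_u\geq \mathrm{e}^{-a_v}\|x\|_u^d$ at each place, with $a_v$ an $M_K$-constant. The lower-order part $G=(g_1,\dots,g_N)$ satisfies $\|G(x)\|_u\leq \mathrm{e}^{b_v}\max(1,\|x\|_u)^{d-1}$. Since $d\geq2$, $\|F(x)\|_u$ then dominates both $\|G(x)\|_u$ and $\|x\|_u$ once $\|x\|_u$ exceeds an explicit threshold $\mathrm{e}^{c_v}$.

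\textbf{Upper bound for $G$.} Write each $g_i=\sum_\alpha a_{i,\alpha}x^\alpha$ with $a_{i,\alpha}\in K$ and $|\alpha|\leq d-1$. Let $m$ be the number of monomials of degree $\le d-1$. Set
\[
b_v=\max_{i,\alpha}\log^+\|a_{i,\alpha}\|_v+\epsilon_v\log m,
\]
where $\epsilon_v=1$ if $v$ is archimedean and $\epsilon_v=0$ otherwise. This is an $M_K$-constant by Remark \ref{MKconstrmk}(ii), since the maximum of finitely many such functions is bounded by their sum, and $M_K^\infty$ has finite measure. The triangle inequality (ultrametric at finite places) gives the bound, and it persists for any extension $u$ of $v$ because $u$ restricted to $K$ equals $v$.

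\textbf{Lower bound for $F$; the main step.} Since $f_1,\dots,f_N$ have no nonzero common zero, the Nullstellensatz gives an integer $D\geq d$ and homogeneous polynomials $h_{ij}\in K[x_1,\dots,x_N]$ of degree $D-d$ with
\[
x_j^{D}=\sum_i h_{ij}f_i \qquad\text{for each } j.
\]
This is a purely algebraic identity over $K$ (via the Macaulay resultant). Applying the same triangle-inequality estimate to the $h_{ij}$ gives
\[
\|x\|_u^{D}\leq \mathrm{e}^{a_v}\|x\|_u^{D-d}\|F(x)\|_u,
\]
where $a_v$ is built from the $\log^+$ of the coefficients of the $h_{ij}$ plus $\log$ of the number of terms at archimedean places. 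Hence $\|F(x)\|_u\geq \mathrm{e}^{-a_v}\|x\|_u^d$ for every extension $u$ of $v$ and every field $E$. The point to be careful about is exactly this: the constants depend only on the coefficients in $K$, so they are uniform in $E$ and $u$. I expect this step to be the crux, though the Nullstellensatz makes it routine.

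\textbf{Conclusion.} For $\|x\|_u\geq1$ we get
\[
\|f(x)\|_u\geq \mathrm{e}^{-a_v}\|x\|_u^d-\mathrm{e}^{b_v}\|x\|_u^{d-1}
\]
at archimedean places. At non-archimedean places the ultrametric inequality gives instead $\|f(x)\|_u=\|F(x)\|_u$ once $\|F(x)\|_u>\|G(x)\|_u$. In both cases, choosing
\[
c_v=a_v+b_v+\epsilon_v\log 2
\]
works. Indeed $\|x\|_u>\mathrm{e}^{c_v}$ forces $\mathrm{e}^{-a_v}\|x\|_u>2\mathrm{e}^{b_v}\geq2$ (resp.\ $>\mathrm{e}^{b_v}\geq 1$), so
\[
\|f(x)\|_u\geq\tfrac12\mathrm{e}^{-a_v}\|x\|_u^d>\|x\|_u^{d-1}\geq\|x\|_u
\]
in the archimedean case, and similarly in the other case. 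The function $c_v$ is a sum of $M_K$-constants, hence an $M_K$-constant.
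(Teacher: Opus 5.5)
Your proposal is correct and follows the paper's proof essentially step by step. Both use the homogeneous Nullstellensatz identity $x_j^q=\sum_i H_{ij}f_i$ over $K$ to get $\|\tilde f(x)\|_u\geq \mathrm{e}^{-a_v}\|x\|_u^d$, bound the lower-order part by the triangle (resp.\ ultrametric) inequality, and take $c_v$ to be a sum of coefficient-based $M_K$-constants. The only difference is cosmetic bookkeeping: the paper uses $\log(1+\sum L_v(H_{ij}))+\log(1+\sum L_v(g_i))$ at archimedean places, where you use the maximal coefficient plus $\log$ of the number of terms plus $\log 2$.
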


We introduce the following notations to simplify the writing of proofs.

\begin{notation}\label{notation}
For a polynomial $F(x_1,\dots,x_n)=\sum\limits_{i_1,\dots,i_n\geq0}a_{i_1,\dots,i_n}x_1^{i_1}\cdots x_n^{i_n}$ with $K$-coefficients and an absolute value $v\in M_K$, we denote
$$
H_v(F)=\max\limits_{(i_1,\dots,i_n)\in\mathbb{N}^n}\|a_{i_1,\dots,i_n}\|_v\quad\text{and}\quad L_v(F)=\sum\limits_{(i_1,\dots,i_n)\in\mathbb{N}^n}\|a_{i_1,\dots,i_n}\|_v.
$$
\end{notation}

\proof[Proof of Lemma \ref{explem}]
Write 
$$
f(x_1,\dots,x_N)=(f_1(x_1,\dots,x_N)+g_1(x_1,\dots,x_N),\dots,f_N(x_1,\dots,x_N)+g_N(x_1,\dots,x_N))
$$
as in Definition \ref{regendo}. We have $f_1(x_1,\dots,x_N),\dots,f_N(x_1,\dots,x_N)$ are homogeneous polynomials of degree $d\geq2$ that have no nonzero common zeros in $\overline{K}$ and $g_1(x_1,\dots,x_N),\dots,g_N(x_1,\dots,x_N)$ are polynomials of degree $<d$. According to the homogeneous Nullstellensatz, there is an integer $q\geq d$ and homogeneous $H_{ij}\in K[x_1,\dots,x_N]$ of degree $q-d$ such that
$$
x_j^q=\sum\limits_{i=1}^{N}H_{ij}(x_1,\dots,x_N)f_i(x_1,\dots,x_N)
$$
holds for every $1\leq j\leq N$.

For $v\in M_K$, we set
$$
c_v=\left\{
\begin{array}{cc}
\log\left(1+\sum\limits_{1\leq i,j\leq N}L_v(H_{ij})\right)+\log\left(1+\sum\limits_{i=1}^{N}L_v(g_i)\right), & v\in M_{K}^{\infty} \\
\log^+\max\limits_{1\leq i,j\leq N}H_v(H_{ij})+\log^+\max\limits_{1\leq i\leq N}H_v(g_i), & v\in M_K^0
\end{array}
\right..
$$
One can check that $(c_v)_{v\in M_K}$ is an $M_K$-constant. We check that these $c_v$ satisfy the required property.

Write $x=(x_1,\dots,x_N)$. Denote $\tilde{f}(x)=(f_1(x),\dots,f_N(x))$ and $g(x)=(g_1(x),\dots,g_N(x))$.

Let $v\in M_K$ and let $x\in E^N$ satisfy $\|x\|_u>\mathrm{e}^{c_v}$. If $v\in M_K^{\infty}$, then we have
$$
\|x\|_u^q\leq\|\tilde{f}(x)\|_u\cdot\|x\|_u^{q-d}\cdot\sum\limits_{1\leq i,j\leq N}L_v(H_{ij})\quad\text{and}\quad\|g(x)\|_u\leq\|x\|_u^{d-1}\cdot\sum\limits_{i=1}^{N}L_v(g_i)
$$
and hence
\begin{align*}
\|f(x)\|_u\geq\|\tilde{f}(x)\|_u-\|g(x)\|_u
&\geq\|x\|_u^{d-1}\cdot\left(\frac{\|x\|_u}{\sum\limits_{1\leq i,j\leq N}L_v(H_{ij})}-\sum\limits_{i=1}^{N}L_v(g_i)\right) \\
&>\|x\|_u\cdot\left(\frac{\mathrm{e}^{c_v}}{\sum\limits_{1\leq i,j\leq N}L_v(H_{ij})}-\sum\limits_{i=1}^{N}L_v(g_i)\right)>\|x\|_u.
\end{align*}
If $v\in M_K^0$, then the estimates become
$$
\|x\|_u^q\leq\|\tilde{f}(x)\|_u\cdot\|x\|_u^{q-d}\cdot\max\limits_{1\leq i,j\leq N}H_v(H_{ij})\quad\text{and}\quad\|g(x)\|_u\leq\|x\|_u^{d-1}\cdot\max\limits_{1\leq i\leq N}H_v(g_i).
$$
The inequality $\|x\|_u>\mathrm{e}^{c_v}$ implies $\|\tilde{f}(x)\|_u>\|g(x)\|_u$. Thus we can also get
$$
\|f(x)\|_u=\|\tilde{f}(x)\|_u\geq\frac{\|x\|_u^d}{\max\limits_{1\leq i,j\leq N}H_v(H_{ij})}>\|x\|_u\cdot\frac{\mathrm{e}^{c_v}}{\max\limits_{1\leq i,j\leq N}H_v(H_{ij})}\geq\|x\|_u.
$$
Hence we finish the proof.
\endproof

\subsection{A linear algebra lemma}\label{sec2.2}

In this subsection, we prove a linear algebra lemma that will play a crucial role in the proof later. The main statement is Proposition \ref{linlem}.

In the following, we let $V$ be a finite-dimensional normed vector space over $\mathbb{R}$. We start with some basic definitions and properties.

\begin{definition}\label{polydef}
\begin{enumerate}
\item
A \emph{polytope} in $V$ is the convex hull of finitely many points in $V$.
\item
Let $P\subseteq V$ be a polytope. A point $v\in P$ is said to be a \emph{vertex} of $P$ if $P\setminus\{v\}$ is still convex.
\end{enumerate}
\end{definition}

\begin{lemma}\label{polylem}
Let $P\subseteq V$ be a polytope. Suppose $P$ is the convex hull of points $v_1,\dots,v_n$.
\begin{enumerate}
\item
Let $H\subseteq V$ be a linear coset. Then $P\cap H$ is also a polytope.
\item
Each vertex of $P$ lies in $\{v_1,\dots,v_n\}$, and $P$ is the convex hull of its finitely many vertices.
\end{enumerate}
\end{lemma}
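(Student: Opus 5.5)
We prove the two parts of Lemma \ref{polylem} in reverse order, since (ii) is the structural fact and (i) reduces to a finiteness statement in the same spirit. Throughout, $V$ is finite-dimensional, so all notions are purely affine; the norm plays no role except through compactness.

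\emph{Part (ii).} Let $v$ be a vertex of $P=\mathrm{conv}(v_1,\dots,v_n)$, and suppose $v\notin\{v_1,\dots,v_n\}$. Then every $v_i$ lies in $P\setminus\{v\}$. This set is convex by the definition of a vertex, so it contains $\mathrm{conv}(v_1,\dots,v_n)=P$, which contradicts $v\in P$. Hence the set $\mathrm{Vert}(P)$ of vertices is contained in $\{v_1,\dots,v_n\}$ and is finite.

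For the second claim we induct on $n$, showing $P=\mathrm{conv}(\mathrm{Vert}(P))$.
\begin{enumerate}
\item If every $v_i$ is a vertex, we are done.
\item Otherwise some $v_i$, say $v_n$, is not a vertex. Then $P\setminus\{v_n\}$ is not convex, so there exist $a,b\in P\setminus\{v_n\}$ and $t\in(0,1)$ with $v_n=ta+(1-t)b$.
\item Write $a$ and $b$ as convex combinations of $v_1,\dots,v_n$. Substituting gives $v_n=\lambda v_n+\sum_{i<n}\mu_i v_i$ with $\lambda,\mu_i\ge0$ and $\lambda+\sum_{i<n}\mu_i=1$.
\item If $\lambda=1$, then the coefficients of $v_n$ in both $a$ and $b$ equal $1$, forcing $a=b=v_n$, a contradiction. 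So $\lambda<1$, and we may solve $v_n=\sum_{i<n}\frac{\mu_i}{1-\lambda}v_i\in\mathrm{conv}(v_1,\dots,v_{n-1})$.
\item Consequently $P=\mathrm{conv}(v_1,\dots,v_{n-1})$, and the induction hypothesis applies.
\end{enumerate}
Note that the vertex notion depends only on $P$, not on the chosen generators, so the induction is consistent. The base case $n=1$ is trivial: $P\setminus\{v_1\}=\emptyset$ is convex, so $v_1$ is a vertex.

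\emph{Part (i).} Write $H=h_0+W$ with $W$ a linear subspace.
\begin{enumerate}
\item The set $P\cap H$ is convex and compact.
\item It is the image under the linear map $(\lambda_1,\dots,\lambda_n)\mapsto\sum\lambda_iv_i$ of the set
$$S=\Big\{\lambda\in\mathbb{R}_{\ge0}^n:\ \textstyle\sum\lambda_i=1,\ \sum\lambda_iv_i\in H\Big\}.$$
The condition $\sum\lambda_iv_i\in H$ is a system of linear equations: apply a linear projection $V\to V/W$ and require the image to equal the class of $h_0$.
\item Hence $S$ is the intersection of the standard simplex with an affine subspace of $\mathbb{R}^n$, that is, a bounded set cut out by finitely many linear equalities and inequalities.
\item It then suffices to show that such a set $S$ is the convex hull of finitely many points, because linear images of convex hulls of finitely many points are again convex hulls of finitely many points.
\end{enumerate}

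\emph{The main step: $S$ is a polytope.} Call $\lambda\in S$ \emph{basic} if the columns $\{(v_i,1)\ \text{mod}\ W\times0 : \lambda_i>0\}$ are linearly independent. Equivalently, $\lambda$ is the unique solution of the equality system once we impose $\lambda_j=0$ for the remaining indices. There are finitely many supports, hence finitely many basic points. The standard Carathéodory-type reduction then shows that every $\lambda\in S$ is a convex combination of basic points:
\begin{enumerate}
\item If the active columns of $\lambda$ are dependent, take a nonzero kernel vector $\delta$ supported on $\mathrm{supp}(\lambda)$. Its coordinates sum to zero, because the last coordinate of each column is $1$.
\item Move along $\lambda\pm s\delta$ until a coordinate hits $0$ in each direction. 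This is possible since $S$ is bounded and $\delta$ has both positive and negative entries.
\item This writes $\lambda$ as a convex combination of two points of $S$ with strictly smaller support.
\item Induct on the size of the support.
\end{enumerate}

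The main care needed is in this last argument: confirming that $\delta$ has entries of both signs, so that both directions hit the boundary, and that the two endpoints indeed remain in $S$.
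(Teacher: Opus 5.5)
Your proof is correct, but it takes a different route from the paper in both parts.

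For (ii), the paper only remarks that the first claim is easy (this is your argument) and cites the Krein--Milman theorem for $P=\mathrm{conv}(\text{vertices})$. Your induction discards a non-vertex generator $v_n=ta+(1-t)b$ because it already lies in $\mathrm{conv}(v_1,\dots,v_{n-1})$. This is fully elementary. It also spares you from identifying the paper's notion of vertex with extreme points and from passing from the closed convex hull to the convex hull.

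For (i), the paper reduces to a single hyperplane $H=\{l=0\}$, iterating for general cosets (legitimate, since each step outputs a polytope). It then works directly in $V$: an exchange operation moves weight from a pair $v_i,v_j$ with $l(v_i)l(v_j)<0$ onto the crossing point $\overline{v_iv_j}\cap H$. This shows that $P\cap H$ is the hull of the generators on $H$ together with these crossing points. You instead lift to the coefficient simplex. There you prove that its intersection with an affine subspace of any codimension is the hull of its finitely many basic points, via the Carath\'eodory-type reduction along a zero-sum kernel vector, and then push forward.

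Your route needs no iteration over hyperplanes and is the standard basic-feasible-solution argument. For a hyperplane, the active columns lie in $V/W\times\mathbb{R}\cong\mathbb{R}^2$, so basic points have support of size at most $2$. Their images are exactly the paper's generating set: support $1$ gives the generators on $H$, and support $2$ gives the crossing points, since both coefficients are positive only when $l(v_i)$ and $l(v_j)$ have opposite signs. So your argument is a codimension-free version of the paper's. The paper's version has the advantage of naming the generators of $P\cap H$ explicitly inside $V$.
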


\begin{proof}
\begin{enumerate}
\item
We may assume that $H$ is a hyperplane and $P\cap H$ is nonempty. Suppose $H$ is defined by the linear equation $l(v)=0$. Let $A=\{v_1,\dots,v_n\}\cap H$ and
$$
B=\{\overline{v_iv_j}\cap H\mid l(v_i)l(v_j)<0;1\leq i<j\leq n\}
$$
where $\overline{v_iv_j}$ stands for the segment joining $v_i$ and $v_j$. We claim that $P\cap H$ is the convex hull of the finite set $A\cup B$. Since $P\cap H$ is convex and contains $A\cup B$, it contains the convex hull of $A\cup B$. Now we establish the other inclusion.

Take $x\in P\cap H$ and write $x=\sum\limits_{i=1}^n\lambda_iv_i$ with each $\lambda_i\geq0$ and $\sum\limits_{i=1}^n\lambda_i=1$. We apply the following operation to turn this expression into $x=\sum\limits_{v_i\in A}\lambda_iv_i+\sum\limits_{q\in B}\varepsilon_qq$ with each $\varepsilon_q\geq0$ and $\sum\limits_{v_i\in A}\lambda_i+\sum\limits_{q\in B}\varepsilon_q=1$.

If the expression does not have the expected form, then there exist $1\leq i<j\leq n$ such that $\lambda_i,\lambda_j>0$ and $l(v_i)l(v_j)<0$. Write the intersection point $q$ of $\overline{v_iv_j}$ and $H$ as $q=tv_i+(1-t)v_j$ with $0<t<1$, and set $\varepsilon=\min\{\frac{\lambda_i}{t},\frac{\lambda_j}{1-t}\}$. Then we have
$$
\lambda_iv_i+\lambda_jv_j=\varepsilon q+(\lambda_i-\varepsilon t)v_i+(\lambda_j-\varepsilon(1-t))v_j.
$$

In this new expression, all coefficients remain nonnegative and their total is unchanged while at least one of the two coefficients of $v_i$ and $v_j$ now becomes zero. Hence after applying this operation finitely many times, we can get the desired expression and conclude that $P\cap H$ is contained in the convex hull of $A\cup B$. Therefore, $P\cap H$ is equal to this convex hull and it is also a polytope.

\item
The first part of the assertion is easy, and the second part is a baby case of the Krein--Milman theorem \cite[Theorem 3.23]{Rud91}.
\end{enumerate}
\end{proof}

\begin{definition}\label{parlinisom}
A \emph{partial linear isometry} in $V$ means a linear isometry $T\colon U\to V$ onto its image, where $U$ is a linear subspace of $V$. The \emph{rank} of this partial linear isometry $T$ is the dimension of its definition domain $U$.
\end{definition}

The partial linear isometries of $V$ form a semigroup under partial composition.

\begin{proposition}\label{linlem}
Suppose that the closed unit ball of $V$ is a polytope. Let $T_1,\dots,T_n$ be partial linear isometries of $V$. Then the semigroup generated by $T_1,\dots,T_n$ is finite.
\end{proposition}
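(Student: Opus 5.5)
Let $B$ denote the closed unit ball of $V$, which is a polytope by assumption. The plan is to show that every element of the semigroup is determined by finitely many discrete data, so that only finitely many distinct maps can occur. The natural discrete data are the subspaces and the images of the vertices of the sections $B\cap U$.

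\textbf{Step 1: the domains form a finite family.} For a subspace $U\subseteq V$, the set $B\cap U$ is the closed unit ball of $U$. By Lemma \ref{polylem}(i) it is a polytope, and by Lemma \ref{polylem}(ii) it has finitely many vertices.

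Let $T\colon U\to V$ be a partial linear isometry. Then $T$ maps $B\cap U$ bijectively onto $B\cap T(U)$, since linear isometries preserve unit balls of the restricted norms. A linear bijection preserves convexity, so $T$ maps the vertices of $B\cap U$ onto the vertices of $B\cap T(U)$.

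A composition $S\circ T$ of partial isometries has domain $T^{-1}(\mathrm{dom}\,S\cap T(U))$. So I would consider the collection $\mathcal{U}$ of subspaces obtained from $V$ and the domains and images of $T_1,\dots,T_n$ by repeatedly applying the $T_i$, their inverses $T_i^{-1}$, and intersections. Every domain and image of an element of the semigroup lies in $\mathcal{U}$.

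\textbf{The main obstacle is showing that $\mathcal{U}$ is finite.} Here the polytope structure is essential: for a Euclidean ball the statement fails, for instance with an irrational rotation. My first attempt is to show that every subspace in $\mathcal{U}$ is spanned by vertices of $B$, or more precisely that $B\cap U$ is a face-type section determined combinatorially by $B$. That is not literally true, however, since sections through the interior of $B$ are not faces.

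The better route is to work with the vertex sets directly. Choose a basis of $U$ among the vertices of $B\cap U$. A partial isometry with domain $U$ is then determined by where it sends these finitely many vertices, and they must go to vertices of $B\cap T(U)$. The problem therefore reduces to showing that only finitely many vertices arise overall, i.e.\ that the set
$$
\mathcal{W}=\bigcup_{U\in\mathcal{U}}\mathrm{Vert}(B\cap U)
$$
is finite.

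To prove this, I would induct on the rank. Among all elements of the semigroup, consider those of minimal rank that occur infinitely often; by taking products, elements of rank $r$ either stay at rank $r$ or drop. The group-like part acting at a fixed rank $r$ is then handled as follows. If $S\circ T$ has the same rank as $T$, then $T(U)\subseteq \mathrm{dom}\,S$.

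Fix a pair $(U,U')$ of possible domain and image. The elements with that domain and image form a set in bijection with vertex-permutation data between two fixed finite sets, hence a finite set. Rank can only drop finitely many times, and each drop produces a subspace cut out by finitely many previously obtained subspaces. The finiteness of $\mathcal{U}$ then follows by induction on dimension, because there are only finitely many pairs at each stage.

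\textbf{Conclusion.} Once finitely many domain/image pairs and finitely many vertex assignments are established, the semigroup is finite, since each of its elements is determined by these data.
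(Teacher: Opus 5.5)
Your reduction is sound as far as it goes, and it uses the same local fact as the paper: a partial isometry sends the vertices of $B\cap U$ onto those of $B\cap T(U)$, and these vertices span $U$, so for a fixed domain/image pair only finitely many isometries exist.

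The gap is the step you yourself call the main obstacle, the finiteness of $\mathcal{U}$ (equivalently of $\mathcal{W}$). This is not a lemma on the way to the proposition; it is essentially the proposition itself. Every member of $\mathcal{U}$ other than $V$ is the domain of a word in $T_1^{\pm1},\dots,T_n^{\pm1}$, so the only evident proof of its finiteness is the proposition applied to these $2n$ maps.

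Your argument for it fails at ``each drop produces a subspace cut out by finitely many previously obtained subspaces''. New $r$-dimensional subspaces are also created when the rank does \emph{not} drop. If $T$ has rank $r$ and image $W$, and $S$ has larger rank with $W\subseteq\mathrm{dom}\,S$, then $S\circ T$ still has rank $r$ but has the new image $S(W)$. So the $r$-dimensional members of $\mathcal{U}$ are the orbit of the ``drop'' subspaces under the partial action $W\mapsto S(W)$ of higher-rank maps, iterated arbitrarily often. In general a finite family of partial bijections can have infinite orbits, so further input is needed.

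Knowing by downward induction that only finitely many elements have rank $>r$ does not supply that input, because this set is not closed under composition. Products of higher-rank elements fall to rank $r$ and then keep moving $r$-dimensional subspaces. For example, for two generic rank-$2$ generators in $\mathbb{R}^3$, a rank-$1$ word is a single drop followed by a chain of moves of a line, of a priori unbounded length. Likewise, ``consider those of minimal rank that occur infinitely often'' never leads to a contradiction.

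The missing idea is the paper's: induct on the maximal rank $r_0$ of the \emph{generators}, and use rigidity at that top rank. If $T_{i_k}\circ\cdots\circ T_{i_1}$ has rank $r_0$, then every factor has rank $r_0$ and the image of each factor equals the domain of the next. Hence the product has the domain of a generator and the image of a generator, and your vertex count shows the set $R$ of rank-$r_0$ elements is finite.

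Lower ranks are then handled by changing the generating set rather than by tracking subspaces. Let $L$ be the generators of rank $<r_0$, $A=\{S_2\circ S_1\colon S_1,S_2\in R\}\setminus R$, and $B=\{S_2\circ S_1\colon S_1\in L,\ S_2\in R\}$. Write an element as a word in $L\cup R$ and merge adjacent $R$-letters whose product stays in $R$. Then, scanning from the last-applied letter, pair each $R$-letter with the letter applied just before it. This shows every element lies in $X\cup R\cup\{S\circ S'\colon S'\in R,\ S\in X\}$, where $X$ is the semigroup generated by the finite set $L\cup A\cup B$ of elements of rank $<r_0$; $X$ is finite by induction. The moves of lower-dimensional subspaces by higher-rank maps, which your argument cannot control, are absorbed into the new generators in $A\cup B$, where the top-rank rigidity applies again.
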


\begin{proof}
We prove by induction on the maximum $r$ of the rank of $T_1,\dots,T_n$. The assertion is valid when $r=0$. Assume the assertion holds when $r<r_0$. We consider the case $r=r_0$.

We first show that there are only finitely many rank-$r_0$ elements in this semigroup. If an element $T_{i_k}\circ\cdots\circ T_{i_1}$ has rank $r_0$, then each $T_{i_j}$ in this expression has rank $r_0$. Moreover, the image and domain of definition of each pair of adjacent generators coincide. Therefore, both the domain of definition and the image of this element have to be the same as one of the generators. Thus in order to finish this step, we only need to argue that there are only finitely many linear isometries between two fixed linear subspace $U,W\subseteq V$.

According to Lemma \ref{polylem}(i), both of the closed unit balls of $U$ and $W$ are polytopes. Then Lemma \ref{polylem}(ii) says that both of them are the convex hull of their finitely many vertices, respectively. We know a linear isometry between $U$ and $W$ must send vertices to vertices, and by Lemma \ref{polylem}(ii) again the vertices of the closed unit ball span the linear space. Hence there are only finitely many linear isometries between $U$ and $W$, which guarantees that there are only finitely many rank-$r_0$ elements in this semigroup. We denote $R$ as this finite set.

Denote $L$ as the set of ``rank $<r_0$" elements of the generators $T_1,\dots,T_n$. Then every element in the semigroup can be expressed as a word in which the letters are elements in $L\cup R$, such that the partial composition of two adjacent letters in $R$ no longer lies in $R$. Let
$$
A=\{S_2\circ S_1\mid S_1,S_2\in R\}\setminus R\quad\textrm{and}\quad B=\{S_2\circ S_1\mid S_1\in L,S_2\in R\}.
$$
Then $A$ and $B$ are finite sets consist of elements of rank $<r_0$. By induction hypothesis, the semigroup $X$ generated by $L\cup A\cup B$ is finite. Moreover, we can see that every element of this semigroup lies in $X\cup R\cup\{S_2\circ S_1\mid S_1\in R,S_2\in X\}$. Hence this semigroup generated by $T_1,\dots,T_n$ is finite and we finish the proof by induction.
\end{proof}

\section{Proof of the main results}\label{Sec3}

We prove Proposition \ref{mainprop} in subsection \ref{sec3.1} and prove Theorem \ref{mainthm} in subsection \ref{sec3.2}.

\subsection{Proof of Proposition \ref{mainprop}}\label{sec3.1}

We proceed in the setting of Proposition \ref{mainprop}. We fix a big polarization $M=(B;\mathcal{M})$ of the arithmetic function field $K$ to use the machinery in subsection \ref{sec2.1} and the result in subsection \ref{expand}.

According to the hypothesis, the normalization of each curve $C_{-n}\subseteq\mathbb{A}_K^N$ is isomorphic to $\mathbb{G}_{m,K}$. Let $\pi_n\colon\mathbb{G}_{m,K}\to C_{-n}$ be a normalization map and let $P_n\colon\mathbb{G}_{m,K}\to\mathbb{A}_K^N$ be the composition of $\pi_n$ with the inclusion $C_{-n}\subseteq\mathbb{A}_K^N$. Notice that the we can choose $\pi_n$ and $P_n$ up to an automorphism of $\mathbb{G}_{m,K}$. We write down the expressions of $P_n$ and regard them as elements in $K[t,t^{-1}]^N$.

Since the maps $f|_{C_{-n}}\colon C_{-n}\to C_{-n+1}$ can be lifted to endomorphisms of the common normalization $\mathbb{G}_{m,K}$, for every $n\geq1$ we get $c_n\in K^{\times}$ and $m_n\in\mathbb{Z}\setminus\{0\}$ such that
$$
f(P_n(t))=P_{n-1}(c_nt^{m_n}).
$$
Adjusting those $P_{-n}$ by applying the automorphism $[-1]$ of $\mathbb{G}_{m,K}$ if necessary, we may assume that every $m_n$ is positive.

Our goal is to show that there exists $(\beta_n)_{n\geq0}\in(K^{\times})^{\mathbb{N}}$ such that
$$
\left\{P_n(t/\beta_n)\mid n\geq0\right\}
$$
is a finite set.

\begin{definition}\label{support}
Let $P\in K[t,t^{-1}]^N$. Write $P(t)=\sum\limits_{i\in\mathbb{Z}}a_it^i$ with $a_i\in K^N$. The \emph{support} of $P$ is defined as the set $\{i\in\mathbb{Z}\mid a_i\neq0\}$.
\end{definition}

\begin{remark}\label{supprmk}
Each map $P_n$ is birational onto its image. Hence for every $P_n$, the greatest common divisor of the elements of its (finite nonempty) support is $1$.
\end{remark}

We first show that there exists $(\lambda_n)_{n\geq0}\in(\overline{K}^{\times})^{\mathbb{N}}$ such that both the heights of the coefficients of $P_n(\lambda_nt)$ and the cardinality of the support of $P_n$ are uniformly bounded.

\begin{proposition}\label{bdhtsup}
Let $f$ be the regular endomorphism as in Proposition \ref{mainprop}. For every $n\geq0$, let $P_n\in K[t,t^{-1}]^N$ be the parametrization of $C_{-n}\subseteq\mathbb{A}_K^N$ as above. We have sequences $(a_n)_{n\geq0}\in(K^{\times})^{\mathbb{N}}$ and $(M_n)_{n\geq0}\in\mathbb{Z}_+^{\mathbb{N}}$ such that $f^n(P_n(t))=P_0(a_nt^{M_n})$ holds for every $n$. For each $n$, choose $\lambda_n\in\overline{K}^{\times}$ with $a_n\lambda_n^{M_n}=1$ and put $Q_n(t)=P_n(\lambda_nt)$. Then the following statements hold.
\begin{enumerate}
\item
There exists $H\in\mathbb{R}_{\geq0}$ such that every coefficient of every component of every $Q_n$ has height at most $H$.
\item
There exists $L\in\mathbb{Z}_+$ such that the cardinality of the support of every $P_n$ is at most $L$.
\end{enumerate}
\end{proposition}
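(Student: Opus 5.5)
The plan is to exploit the identity
$$
f^n(Q_n(t))=f^n(P_n(\lambda_nt))=P_0(a_n\lambda_n^{M_n}t^{M_n})=P_0(t^{M_n}),
$$
which comes from the defining property of $a_n,M_n$. The sequences $(a_n),(M_n)$ themselves are obtained by composing the relations $f(P_k(t))=P_{k-1}(c_kt^{m_k})$, with $M_n=\prod m_k>0$. The idea is that on the "unit circle" $|t|=1$ the right-hand side is bounded independently of $n$. Lemma \ref{explem} then forces $Q_n$ to be bounded on the unit circle, place by place. Indeed, fix $v\in M_K$, an algebraically closed complete field $E$ with absolute value $u$ extending $v$, and $t\in E$ with $\|t\|_u=1$. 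Put $D_v=\sup_{\|s\|_u=1}\|P_0(s)\|_u$. This is at most $L_v(P_0)$ (summing over all components and coefficients) if $v$ is archimedean, and at most $\max H_v$ of the components if $v$ is non-archimedean, so $d_v:=\log^+ D_v$ is an $M_K$-constant by Remark \ref{MKconstrmk}(ii). If $\|Q_n(t)\|_u>\mathrm{e}^{c_v}$, then Lemma \ref{explem} applied repeatedly gives $\|Q_n(t)\|_u<\|f^n(Q_n(t))\|_u=\|P_0(t^{M_n})\|_u\le D_v$. Hence in all cases $\log^+\sup_{\|t\|_u=1}\|Q_n(t)\|_u\le c_v+d_v$.

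Next I convert this sup bound into coefficient bounds. In the non-archimedean case, the sup of a Laurent polynomial over the unit circle of an algebraically closed complete field equals its Gauss norm, i.e. the max of the norms of its coefficients. In the archimedean case, Cauchy's formula (or Parseval) bounds each coefficient by the sup on $|t|=1$. So every coefficient $b$ of every component of $Q_n$ satisfies $\log^+\|b\|_u\le c_v+d_v$. Now pass to $K'=K(\lambda_n)$ with its induced places $w\mid v$. Since $\|\cdot\|_w=\|\cdot\|_u^{n_{w/v}}$ for a suitable $u$, we get $\log^+\|b\|_w\le n_{w/v}(c_v+d_v)$. Integrating and using Remark \ref{MKconstrmk}(iii) gives
$$
h(b)\le\frac{1}{[K':K]}\int_{M_{K'}}n_{w/v}(c_v+d_v)\,d\mu_{K'}=\int_{M_K}(c_v+d_v)\,d\mu_K=:H,
$$
which is independent of $n$. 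This proves (i).

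For (ii), note that Northcott (Theorem \ref{northcott}) is not usable, since $[K(\lambda_n):K]$ is unbounded and bounded height does not bound the number of monomials. Instead I would use the product formula together with Parseval at archimedean places. The support of $P_n$ equals that of $Q_n$; let it have $k$ elements. For each $i$ in the support choose a nonzero component coefficient $b_i\in K'$ of the $t^i$-term of $Q_n$. The product formula gives $\sum_i\int_{M_{K'}}\log\|b_i\|_w\,d\mu=0$. At finite places each term is at most $n_{w/v}(c_v+d_v)$ by the above. At an archimedean place $w$, Parseval on $|t|=1$ applied to one component at a time, summed, gives $\sum_i\|b_i\|_w^2\le N B_w^2$ with $\log^+B_w\le c_v+d_v$. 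Concavity of $\log$ (AM--GM) then yields $\sum_i\log\|b_i\|_w\le k\bigl(\log^+ B_w+\tfrac12\log N-\tfrac12\log k\bigr)$. Integrating and dividing by $k[K':K]$ produces
$$
0\le\int_{M_K}(c_v+d_v)\,d\mu+\tfrac12\mu(M_K^\infty)(\log N-\log k).
$$
Since $\mu(M_K^\infty)>0$, this bounds $k$ by a constant $L$ depending only on $f$ and $P_0$.

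The main obstacle is step (ii): finding a mechanism that bounds the number of monomials without controlling degrees. The archimedean Parseval/AM--GM term $-\tfrac12 k\log k$ against the product formula is what I would try first. The technical points to verify carefully are the normalizations $n_{w/v}$ at archimedean places of $K'$ (where $n_{w/v}=1$ and $\sum_{w\mid v}=[K':K]$ in measure), and the Gauss-norm identity at non-archimedean places.
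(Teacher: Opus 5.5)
Your proposal is correct and follows the paper's proof essentially step for step. Part (i) comes from the expansion Lemma \ref{explem} on the unit circle (Gauss norm at finite places, Cauchy/Parseval at archimedean places), and part (ii) comes from the product formula combined with Parseval and a Jensen-type convexity inequality, giving the same bound $L=\lceil Ne^{2H/\mu_K(M_K^{\infty})}\rceil$. The only cosmetic difference is that you apply convexity across the coefficients at each archimedean place (AM--GM), whereas the paper applies Jensen across the archimedean places for each coefficient; the two yield the identical constant.
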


\begin{proof}
For every $n$, we have
$$
f^n(Q_n(t))=P_0(t^{M_n}).
$$
Write $P_0(t)=(P_{0,1}(t),\dots,P_{0,N}(t))$ and write the $M_K$-constant $(c_v)_{v\in M_K}$ in Lemma \ref{explem} for $f$. We may enlarge $c_v$ such that
\begin{enumerate}
\item
for every $v\in M_K^{\infty}$, we have $c_v\geq\log\left(\sum\limits_{k=1}^{N}L_v(P_{0,k})\right)$ and
\item
for every $v\in M_{K}^0$, we have $c_v\geq\log\max\limits_{1\leq k\leq N}H_v(P_{0,k})$
\end{enumerate}
where we naturally extend Notation \ref{notation} to elements in $K[t,t^{-1}]$. Set
$$
H=\int_{M_K}c_vd\mu_K(v)\quad\text{and}\quad L=\lceil N\mathrm{e}^{\frac{2H}{\mu_K(M_K^{\infty})}}\rceil.
$$
We prove that the conclusions hold.

Write $Q_n(t)=(Q_{n,1}(t),\dots,Q_{n,N}(t))$ and write $Q_{n,k}(t)=\sum\limits_{j\in\mathbb{Z}}b_{k,j}^{(n)}t^j$. Denote $E_n=K(\lambda_n)$. Then all the coefficients $b_{k,j}^{(n)}$ lie in $E_n$. For every $w\in M_{E_n}$, we set $c_w=n_{w/v}c_v$ as in Remark \ref{MKconstrmk}(iii) and let $R_w=\mathrm{e}^{c_w}$.

For $w\in M_{E_n}^0$, we consider the Gauss absolute value $u$ on the function field $E_n(t)$. For our purpose, we let this absolute value be that characterized by $\|P\|_u=H_w(P)^{\frac{1}{n_{w/v}}}$ for every $P\in E_n[t]$. The exponent $\frac{1}{n_{w/v}}$ is used to make $u$ into an extension of $v\in M_K$. Applying Lemma \ref{explem} to $(E_n(t),u)$, we get $\|Q_n(t)\|_u\leq\mathrm{e}^{c_v}$. This means that all coefficients satisfy $\|b_{k,j}^{(n)}\|_w\leq R_w$.

For $w\in M_{E_n}^{\infty}$, we let $\sigma_w\colon E_n\hookrightarrow\mathbb{C}$ be the injection corresponding to $w$. Applying Lemma \ref{explem} to the standard absolute value on $\mathbb{C}$, we see that $\|\sigma_w(Q_n)(t)\|_{\infty}\leq R_w$ for every $t\in\mathbb{C}$ which lies on the unit circle. For every $1\leq k\leq N$, we have
$$
\sum\limits_{j\in\mathbb{Z}}\|b_{k,j}^{(n)}\|_w^2=\sum\limits_{j\in\mathbb{Z}}|\sigma_w(b_{k,j}^{(n)})|^2=\frac{1}{2\pi}\int_{0}^{2\pi}|\sigma_w(Q_{n,k})(\mathrm{e}^{i\theta})|^2d\theta\leq R_w^2.
$$
In particular, we also have $\|b_{k,j}^{(n)}\|_w\leq R_w$. Hence this inequality holds for every $w\in M_{E_n}$ and we get
$$
h(b_{k,j}^{(n)})=\frac{1}{[E_n:K]}\int_{M_{E_n}}\log^+\|b_{k,j}^{(n)}\|_wd\mu_{E_n}(w)\leq\frac{1}{[E_n:K]}\int_{M_{E_n}}c_wd\mu_{E_n}(w)=H.
$$
This finishes the verification of part (i).

To count the nonzero coefficients, we start with the inequality
\begin{align}\label{ineq1}
\sum\limits_{k=1}^N\sum\limits_{j\in\mathbb{Z}}\frac{\|b_{k,j}^{(n)}\|_w^2}{R_w^2}\leq N\quad(w\in M_{E_n}^{\infty}).
\end{align}
For a nonzero coefficient $b$, the product formula and its nonarchimedean bound imply
\begin{align*}
\int_{M_{E_n}^{\infty}}\log\frac{\|b\|_w}{R_w}d\mu_{E_n}(w)
&=-\int_{M_{E_n}^0}\log\|b\|_wd\mu_{E_n}(w)-\int_{M_{E_n}^{\infty}}\log R_wd\mu_{E_n}(w) \\
&\geq-\int_{M_{E_n}}c_wd\mu_{E_n}(w)=-[E_n:K]\cdot H.
\end{align*}
Denote $A=\mu_{E_n}(M_{E_n}^{\infty})=[E_n:K]\mu_K(M_K^{\infty})$. Using Jensen's inequality on the archimedean probability space $(M_{E_n}^{\infty},\mu_{E_n}/A)$, we get
\begin{align}\label{ineq2}
\frac{1}{A}\int_{M_{E_n}^{\infty}}\frac{\|b\|_w^2}{R_w^2}d\mu_{E_n}(w)\geq\exp\left(\frac{2}{A}\int_{M_{E_n}^{\infty}}\log\frac{\|b\|_w}{R_w}d\mu_{E_n}(w)\right)\geq\mathrm{e}^{\frac{-2H}{\mu_K(M_K^{\infty})}}.
\end{align}
Let $s_n$ be the total number of nonzero coefficients of $Q_n$. Integrating \eqref{ineq1} over $M_{E_n}^{\infty}$ while summing \eqref{ineq2} over those nonzero coefficients, we see that
$$
s_nA\cdot\mathrm{e}^{\frac{-2H}{\mu_K(M_K^{\infty})}}\leq NA.
$$
This gives the desired bound $s_n\leq L$ and finishes the proof of part (ii) because the supports of $P_n$ and $Q_n$ are same.
\end{proof}

We have shown that the support of each $P_n$ contains at most $L$ elements. Next, we investigate how the supports evolve. The concept of partial linear isometries (see Definition \ref{parlinisom}) will play a key role.

\begin{notation}\label{eP}
Let $P\in K[t,t^{-1}]^N$ and let $\{e_1,\dots,e_s\}\subseteq\mathbb{Z}$ be its support. Assume $1\leq s\leq L$, which shall always hold in the setting of our concern, and let $e_1<\cdots<e_s$. Denote $e(P)$ as the vector $(e_1,\dots,e_s,0,\dots,0)\in\mathbb{Z}^L$. In this form, we call the first $s$ places as ``active slots" and call the places of those additional zeros as ``padded slots".
\end{notation}

Recall we have $f(P_n(t))=P_{n-1}(c_nt^{m_n})$ for every $n\geq1$.

\begin{proposition}\label{supevolve}
Let $f$ be the regular endomorphism as above and let $d>1$ be its algebraic degree. Let $L\in\mathbb{Z}_+$ be the number given in Proposition \ref{bdhtsup}(ii). Then there is a finite collection $\mathcal{P}$ of partial linear isometries of $(\mathbb{R}^L,\|\cdot\|_{\infty})$ such that the following holds. For every $n\geq1$, there exists some map $T\colon S\to\mathbb{R}^L$ in $\mathcal{P}$ such that
$$
e(P_n)\in S\quad\text{and}\quad T(e(P_n))=\frac{m_n}{d}e(P_{n-1}).
$$
In particular, we have $m_n\|e(P_{n-1})\|_{\infty}=d\|e(P_n)\|_{\infty}$.
\end{proposition}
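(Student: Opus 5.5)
The plan is to compare exponents on the two sides of $f(P_n(t))=P_{n-1}(c_nt^{m_n})$. On the right, the support is exactly $m_n\cdot\mathrm{supp}(P_{n-1})$. On the left, write $f=\tilde f+g$ with $\tilde f=(f_1,\dots,f_N)$ homogeneous of degree $d$ and $\deg g<d$, and $P_n(t)=\sum_{i=1}^s a_it^{e_i}$ with $e_1<\dots<e_s$ and $a_i\in K^N\setminus\{0\}$. Expanding monomials, every exponent occurring in $f(P_n)$ has the form $\sum_i k_ie_i$ with $k_i\in\mathbb{N}$ and $\sum_ik_i\le d$. Hence for each active slot $j$ of $e(P_{n-1})$ there are $k_{ji}\in\mathbb{N}$ with $\sum_ik_{ji}\le d$ and
$$\frac{m_n}{d}e'_j=\sum_{i}\frac{k_{ji}}{d}e_i .$$
So $\frac{m_n}{d}e(P_{n-1})=Ae(P_n)$, where $A$ is an $L\times L$ matrix with entries in $\{0,\frac1d,\dots,1\}$, nonnegative, with row sums $\le1$ and zero rows for the padded slots. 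There are only finitely many such $A$, and each satisfies $\|Ax\|_\infty\le\|x\|_\infty$. This gives the finiteness of the collection and half of the isometry property.

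For the reverse inequality I will use the extremal exponents. By Remark \ref{supprmk}, $P_n$ is nonconstant, so $e(P_n)\ne0$. Let $e_*\in\{e_1,e_s\}$ be the exponent of largest absolute value; say $e_*=e_s>0$ (the case $e_*=e_1<0$ is symmetric). The coefficient of $t^{de_s}$ in $f(P_n)$ is $\tilde f(a_s)$. This is nonzero because the $f_i$ have no nonzero common zero. No other monomial contributes: $d$-fold sums other than $d\cdot e_s$ are strictly smaller, and sums of fewer than $d$ terms are at most $(d-1)e_s<de_s$. Hence $de_s\in m_n\mathrm{supp}(P_{n-1})$, and the corresponding row of $A$ is the coordinate $x_s$ with $k=d$. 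Combined with the bound $\le$ (every exponent $\sum k_ie_i$ with $\sum k_i\le d$ has absolute value $\le d|e_*|$), this already gives $m_n\|e(P_{n-1})\|_\infty=d\|e(P_n)\|_\infty$.

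It remains to choose the domain $S$ so that $T=A|_S$ is an isometry on all of $S$, not only at $e(P_n)$. My choice: $S$ is the subspace cut out by the vanishing of the padded slots of $e(P_n)$, together with all equalities $\sum k_ix_i=\sum k'_ix_i$ (with $|k|,|k'|\le d$) that $e(P_n)$ satisfies. There are finitely many such subspaces, so the pairs $(A,S)$ form a finite collection. On the relatively open cone of $S$ where the strict order of coordinates matches that of $e(P_n)$, the same extremal argument goes through, since it only used the order type and which sums coincide. So $\|Ax\|_\infty=\|x\|_\infty$ on a nonempty open subset of $S$.

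The main obstacle is upgrading this to all of $S$, since a norm-preserving linear map on an open cone need not be an isometry. I would first try to shrink $S$ further (also a finite choice) to the span of the coordinates that actually govern the norm. If that fails, I would refine $T$ by also recording the sign pattern of the extreme slots, so that the identity $(Ax)_{\text{row}}=x_*$ forces $\|Ax\|_\infty\ge\|x\|_\infty$ on the whole subspace. I expect this step, rather than the finiteness or the bound $\|A\|\le1$, to require the real care.
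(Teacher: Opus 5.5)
Your setup is correct and coincides with the paper's. Your $S$ is the paper's subspace, cut out by the equalities $\alpha\cdot x=\beta\cdot x$ with $\alpha,\beta\in\mathcal{I}:=\{\alpha\in\mathbb{N}^L : \|\alpha\|_1\le d\}$ that hold at $e(P_n)$. Your $A$ is the paper's $\frac{1}{d}A$. The bound $\|Ax\|_\infty\le\|x\|_\infty$ holds, and your leading-term argument proves the ``in particular'' identity.

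\textbf{The gap.} It is the step you flag yourself: nothing in the proposal shows that $A|_S$ is an isometry on all of $S$. That is what the statement asserts, and it is what Proposition \ref{linlem} needs later. Your suggested remedies do not close it:
\begin{enumerate}
\item A shrunken domain must still contain $e(P_n)$ and must come from a finite list independent of $n$. So you cannot pass to lines such as $\mathbb{R}e(P_n)$: they vary with $n$, and finiteness of the directions $e(P_n)/\|e(P_n)\|_\infty$ is exactly what this proposition is later used to prove.
\item A span of a few coordinate directions will in general not contain $e(P_n)$.
\item Recording the sign pattern of the extreme slots carves out cones, not linear subspaces, so it does not produce partial linear isometries at all.
\end{enumerate}

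\textbf{The missing idea.} The right dense set is not the chamber containing $e(P_n)$ but all of $U=S\setminus\bigcup H$. Here $H$ runs over the finitely many hyperplanes $(\alpha-\beta)\cdot x=0$ with $\alpha,\beta\in\mathcal{I}$ and $\alpha\cdot e(P_n)\neq\beta\cdot e(P_n)$. None of these hyperplanes contains $S$, since they miss $e(P_n)$, so $U$ is dense in $S$. On $U$ the coincidence pattern of the numbers $\alpha\cdot x$ is the same as at $e(P_n)$, even though the order of the coordinates changes from chamber to chamber.

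Your extremal argument then works at every $x\in U$. Let $\delta_i$ denote the $i$-th standard basis vector.
\begin{enumerate}
\item On $U$ the active coordinates of $x$ are pairwise distinct and the padded ones vanish.
\item So if, say, $x_i>0$ realizes $\|x\|_\infty$, then $d\delta_i$ is the unique maximizer of $\alpha\cdot x$ on $\mathcal{I}$.
\item Hence no other $\beta\in\mathcal{I}$ satisfies $\beta\cdot e(P_n)=de_i$, since such a $\beta$ would satisfy $\beta\cdot x=dx_i$ on all of $S$.
\item Therefore the coefficient of $t^{de_i}$ in $f(P_n)$ is exactly $\tilde f(a_i)\neq 0$, and some active slot $l$ of $e(P_{n-1})$ satisfies $m_ne'_l=de_i$.
\item The $l$-th row of $A$ is then forced to be $\delta_i$, which gives $\|Ax\|_\infty\ge x_i=\|x\|_\infty$. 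The case $x_i<0$ is symmetric.
\end{enumerate}
Thus $\|Ax\|_\infty=\|x\|_\infty$ on $U$, and by continuity on all of $S$.

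The paper packages the same step by taking $x\in U\cap\mathbb{Z}^L$ (after scaling) as new exponents, $\tilde P(t)=\sum_j a_jt^{x_j}$. Class by class, $f(\tilde P)$ has the same coefficients as $f(P_n)$. So its support is $\{d(Ax)_l\}$ with $l$ over the active slots of $e(P_{n-1})$, and regularity of $f$ gives $\|e(f(\tilde P))\|_\infty=d\|e(\tilde P)\|_\infty=d\|x\|_\infty$.
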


\begin{proof}
Let $\mathcal{I}=\{\alpha\in\mathbb{N}^L\mid\|\alpha\|_1\leq d\}$. Let $\mathcal{A}\subseteq M_L(\mathbb{R})=\mathrm{End}(\mathbb{R}^L)$ be the set of matrices that every row is an element of $\mathcal{I}$. Let $\mathcal{S}$ be the set of linear subspaces of $\mathbb{R}^L$ that can be defined by several equations of the form
$$
(\alpha-\beta)\cdot x=0\quad(x\in\mathbb{R}^L)
$$
in which $\alpha,\beta\in\mathcal{I}$. Since $\mathcal{I}$ is finite, so do $\mathcal{A}$ and $\mathcal{S}$. Denote
$$
\mathcal{P}=\left\{\frac{1}{d}A|_{S}\bigg|\ A\in\mathcal{A},\ S\in\mathcal{S},\ \frac{1}{d}A|_{S}\colon S\to\mathbb{R}^L\text{ is an isometry onto its image}\right\}.
$$
We shall show that this finite collection of partial linear isometries satisfies the requirement.

Write $e(P_n)=(e_1,\dots,e_L)$ and write $P_n(t)=\sum\limits_{j=1}^L a_jt^{e_j}$ for some $a_1,\dots,a_L\in K^N$ such that $a_j=0$ for the padded slots $j$. By expanding the expression formally, we find $(B_{\alpha})_{\alpha\in\mathcal{I}}\in(K^N)^{\mathcal{I}}$ such that
$$
f\left(\sum\limits_{j=1}^L a_jt^{k_j}\right)=\sum\limits_{\alpha\in\mathcal{I}}B_{\alpha}t^{\alpha\cdot k}
$$
holds for every $k=(k_1,\dots,k_L)\in\mathbb{Z}^L$.

Partition $\mathcal{I}=I_1\sqcup I_2\sqcup\cdots\sqcup I_t$ according to the value of $\alpha\cdot e(P_n)\ (\alpha\in\mathcal{I})$. Let $S\in\mathcal{S}$ be the linear subspace defined by the equations
$$
(\alpha-\beta)\cdot x=0,\quad\alpha,\beta\in I_j,\quad j=1,\dots,t\quad(x\in\mathbb{R}^L).
$$
Then by definition $e(P_n)\in S$.

We consider a matrix $A\in\mathcal{A}$ defined as follows. Let $l\in\{1,\dots,L\}$. If $l$ is an active slot of $e(P_{n-1})$, then there exists a unique $j_l\in\{1,\dots,t\}$ such that the $l$-th component of $e(P_{n-1})$ is equal to $\frac{\alpha\cdot e(P_n)}{m_n}$ for $\alpha\in I_{j_l}$. We arbitrarily choose an $\alpha\in I_{j_l}$ and let the $l$-th row of $A$ be $\alpha$. If $l$ is a padded slot of $e(P_{n-1})$, then we let the $l$-th row of $A$ be zero.

By definition we have $A\cdot e(P_n)=m_n\cdot e(P_{n-1})$. Therefore, it only remains to check that
$$
T=\frac{1}{d}A|_{S}\colon S\to\mathbb{R}^L
$$
is an isometry onto its image.

Let $\mathcal{H}$ be the finite set of codimension $1$ linear subspaces defined by equations
$$
(\alpha-\beta)\cdot x=0,\quad\alpha,\beta\in\mathcal{I}\text{ lie in different }I_j\quad(x\in\mathbb{R}^L).
$$
Let $U=S\setminus\bigcup\limits_{H\in\mathcal{H}}H$. We have $e(P_n)\in U$, hence $U$ is nonempty. Since the finitely many linear subspaces involved in here are all defined over $\mathbb{Q}$, it suffices to check
$$
\|Tx\|_{\infty}=\|x\|_{\infty}\quad\text{for}\quad x\in U\cap\mathbb{Q}^L.
$$
Pick such an $x=(x_1,\dots,x_L)$ and we may furthermore assume $x\in\mathbb{Z}^L$ by passing to an appropriate multiple. We need to verify $\|Ax\|_{\infty}=d\|x\|_{\infty}$.

By definition, for every $\alpha,\beta\in\mathcal{I}$, we have $(\alpha-\beta)\cdot x=0$ if and only if $(\alpha-\beta)\cdot e(P_n)=0$. Hence for a padded slot $j$ of $e(P_n)$ we have $x_j=0$, and the components of $x$ on the active slots of $e(P_n)$ are pairwise distinct. Write
$$
\tilde{P}(t)=\sum\limits_{j=1}^L a_jt^{x_j}\quad\text{and}\quad Q(t)=f(\tilde{P}(t))=\sum\limits_{\alpha\in\mathcal{I}}B_{\alpha}t^{\alpha\cdot x}.
$$
Then we have $\|e(\tilde{P})\|_{\infty}=\|x\|_{\infty}$. Since $f$ is a regular endomorphism, we also have $\|e(Q)\|_{\infty}=d\|e(\tilde{P})\|_{\infty}$.

When we divide $\mathcal{I}$ according to the value of $\alpha\cdot x\ (\alpha\in\mathcal{I})$, we still get the same partition $\mathcal{I}=I_1\sqcup I_2\sqcup\cdots\sqcup I_t$. This fact, together with the definition of the matrix $A$, lead to the equation $\|e(Q)\|_{\infty}=\|Ax\|_{\infty}$. Hence we get $\|Ax\|_{\infty}=d\|x\|_{\infty}$ and finish the proof.
\end{proof}

With these preparations, now we can prove Proposition \ref{mainprop}.

\proof[Proof of Proposition \ref{mainprop}]
We continue from the setting above. By considering the parametrizations of the normalizations of $C_{-n}$, we get a sequence $(P_n)_{n\geq0}\in K[t,t^{-1}]^N$ which satisfies
$$
f(P_n(t))=P_{n-1}(c_nt^{m_n})
$$
for every $n\geq1$. Proposition \ref{bdhtsup} says that the following statements hold.
\begin{enumerate}
\item
There exists a sequence $(\lambda_n)_{n\geq0}\in(\overline{K}^{\times})^{\mathbb{N}}$ and a nonnegative number $H$ such that every coefficient of every component of $P_n(\lambda_nt)$ has height at most $H$.
\item
There exists a positive integer $L$ such that the support of $P_n$ contains at most $L$ elements for every $n$.
\end{enumerate}
Then Proposition \ref{supevolve} says that there exists a finite collection $\mathcal{P}$ of partial linear isometries of $(\mathbb{R}^L,\|\cdot\|_{\infty})$ such that for every $n\geq1$, there is a map $T_n\colon S\to\mathbb{R}^L$ in $\mathcal{P}$ such that
$$
e(P_n)\in S\quad\text{and}\quad T_n(e(P_n))=\frac{m_n}{d}e(P_{n-1}).
$$
Put $r_n=\|e(P_n)\|_{\infty}>0$ and $x_n=e_n/r_n$. The isometry equality gives $m_nr_{n-1}=dr_n$, and hence we have $T_nx_n=x_{n-1}$ for every $n\geq1$.

We denote $T_n^{-1}$ as the total inverse partial linear isometry of $T_n$. Then $\{T_n^{-1}\mid n\geq0\}$ is also a finite set. Since the closed unit ball of $(\mathbb{R}^L,\|\cdot\|_{\infty})$ is a polytope, we can use Proposition \ref{linlem} to conclude that the semigroup $\mathcal{S}$ generated by $\{T_n^{-1}\mid n\geq0\}$ is also finite. For every $n\geq0$, we can see that $x_0$ lies in the domain of definition of $T=T_n^{-1}\circ\cdots\circ T_1^{-1}\in\mathcal{S}$ and $Tx_0=x_n$. Therefore, the finiteness of $\mathcal{S}$ implies that $\{x_n\mid n\geq0\}$ is finite. As we have mentioned in Remark \ref{supprmk}, the integral vectors $e(P_n)$ are primitive. Hence
$$
\{e(P_n)\mid n\geq0\}
$$
is also finite. In other words, the supports of $P_n$ have only finitely many possibilities.

Now we prove that there exists $(\beta_n)_{n\geq0}\in(K^{\times})^{\mathbb{N}}$ such that
$$
\left\{P_n(t/\beta_n)\mid n\geq0\right\}
$$
is a finite set.

Fix one of the finitely many possibilities of the support $\{e_1,\dots,e_s\}$. Since its greatest common divisor is $1$, we can find integers $u_1,\dots,u_s$ such that $\sum\limits_{j=1}^{s}u_je_j=1$. For a $P_n$ with this support we write $P_n(t)=(P_{n,1}(t),\dots,P_{n,N}(t))$ and
$$
P_{n,k}(t)=\sum\limits_{j=1}^{s}a_{k,j}^{(n)}t^{e_j},\quad k=1,\dots,N.
$$
Put $b_{k,j}^{(n)}=a_{k,j}^{(n)}\lambda_n^{e_j}$. Then the height of every $b_{k,j}^{(n)}\in\overline{K}$ is at most $H$. For each $j$ choose one nonzero element $a_j^{(n)}$ from the vector $(a_{1,j}^{(n)},\dots,a_{N,j}^{(n)})$, and let $b_j^{(n)}=a_j^{(n)}\lambda_n^{e_j}$. Define
$$
\beta_n=\prod\limits_{j=1}^{s}(a_{j}^{(n)})^{u_j}\in K^{\times},
$$
and let $B_n=\prod\limits_{j=1}^{s}(b_{j}^{(n)})^{u_j}=\beta_n\lambda_n$. Then the coefficients of $P_n(t/\beta_n)$ has the form $a_{k,j}^{(n)}\beta_n^{-e_j}=b_{k,j}^{(n)}B_n^{-e_j}$. Hence we have
$$
h(a_{k,j}^{(n)}\beta_n^{-e_j})\leq h(b_{k,j}^{(n)})+|e_j|h(B_n)\leq H+|e_j|H\sum\limits_{k=1}^{s}|u_k|.
$$
Since there are only finitely many possibilities of $\{e_1,\dots,e_s\}$, we see that the heights of the coefficients of $P_n(t/\beta_n)$ are uniformly bounded. Thus Northcott's finiteness property \ref{northcott} helps us conclude that $\left\{P_n(t/\beta_n)\mid n\geq0\right\}$ is a finite set.

Since changing the parametrization from $P_n(t)$ into $P_n(t/\beta_n)$ keeps the image being the curve $C_{-n}$, we finish the proof that there are only finitely many different curves in $(C_{-n})_{n\geq0}$.
\endproof

\subsection{Proof of Theorem \ref{mainthm}}\label{sec3.2}

In this subsection, we prove Theorem \ref{mainthm}. We first prove the first part, that is, the normalization $\widetilde{C}$ of the curve $C$ is isomorphic to either $\mathbb{A}^1$ or $\mathbb{G}_m$.

We find an arithmetic function field $K\subseteq\mathbb{C}$ such that the regular endomorphism $f$, the starting point $x$, and the curve $C\subseteq\mathbb{A}^N$ are defined over $K$. Since $\mathcal{O}_f(x)\cap C$ is infinite, we can use Theorem \ref{siegel} to conclude that the normalization $\widetilde{C}$ of $C$ is isomorphic to $\mathbb{P}_K^1\setminus D$ where $D$ is a reduced zero cycle on $\mathbb{P}_K^1$ with $\deg(D)\in\{1,2\}$ (see also the paragraph below Theorem \ref{siegel} for more informations). In our situation, the operation of taking normalization commutes with arbitrary base field extension because the curves are all integral and contain infinitely many rational points. If $\deg(D)=1$, then $D$ is a rational point and hence $\widetilde{C}$ is isomorphic to $\mathbb{A}^1$ (either over $K$ or over $\mathbb{C}$). If $\deg(D)=2$, then $\widetilde{C}$ is isomorphic to $\mathbb{G}_m$ over $\mathbb{C}$.

Now we prove that $C$ is $f$-periodic in the latter case. If $\deg(f)=1$, then $f$ is an automorphism and hence the DML conjecture holds for $f$ \cite[Theorem 1.3]{BGT10}. Thus we may assume that $\deg(f)>1$.

By passing $K$ to a finite extension if necessary, we may assume that $D$ consists of two rational points and hence $\widetilde{C}$ is isomorphic to $\mathbb{G}_m$ over $K$. Since $\mathcal{O}_f(x)\cap C$ is infinite, we can find a sequence of irreducible closed subcurves
$$
\cdots\stackrel{f}\rightarrow C_{-n}\stackrel{f}\rightarrow\cdots\stackrel{f}\rightarrow C_{-1}\stackrel{f}\rightarrow C_0=C
$$
of $\mathbb{A}_K^N$ such that each $C_{-n}$ has an infinite intersection with the orbit $\mathcal{O}_f(x)$.
Let $\overline{C_{-n}}$ be the projective closure of $C_{-n}$ in $\mathbb{P}_K^N$ and let $\widetilde{\overline{C_{-n}}}$ be its normalization. When regarding $f$ as an endomorphism of $\mathbb{P}_K^N$, we have $f(\overline{C_{-n}})=\overline{C_{-n+1}}$ for every $n\geq1$. As we have explained in the paragraph below Theorem \ref{siegel}, every $\widetilde{\overline{C_{-n}}}$ is isomorphic to $\mathbb{P}_K^1$. Also, every normalization $\widetilde{C_{-n}}$ admits an open immersion into $\widetilde{\overline{C_{-n}}}$ such that the complement is a reduced divisor of degree at most $2$. We claim that this complement consists of two rational points for every $n$. This is because the following diagrams commute,
\[
\xymatrixcolsep{5pc}\xymatrix{\widetilde{C_{-n}} \ar[r]^-{\widetilde{f|_{C_{-n}}}} \ar@{^{(}->}[d] & \widetilde{C_{-n+1}} \ar@{^{(}->}[d] \\
\widetilde{\overline{C_{-n}}} \ar[r]^-{\widetilde{f|_{\overline{C_{-n}}}}} & \widetilde{\overline{C_{-n+1}}}}
\]
the horizontal arrows at the bottom are surjective, and $\widetilde{\overline{C_0}}\setminus\widetilde{C_0}$ has been assumed to consist of two rational points.

As a result, every $\widetilde{C_{-n}}$ is isomorphic to $\mathbb{G}_{m,K}$. Then Proposition \ref{mainprop} says that $\{C_{-n}\mid n\geq0\}$ is a finite set. Hence $C_0=C$ is an $f$-periodic curve and we finish the proof.

\section*{Acknowledgements}
We are grateful to our advisor, Junyi Xie, for introducing the dynamical Mordell--Lang conjecture to us and his guidance through the past few years.

This work is supported by the National Natural Science Foundation of China Grant No. 12271007.

\bibliographystyle{alpha}
\bibliography{reference}

@book{BGT16,
    title={The Dynamical Mordell--Lang Conjecture},
    author={J. P. Bell and D. Ghioca and T. J. Tucker},
    publisher={American Mathematical Society},
    year={2016},
    series={Mathematics Surveys and Monographs},
    volume={\textbf{210}},
    address={Providence, RI}
}

@article{Xie17,
    title={The dynamical {M}ordell--{L}ang conjecture for polynomial endomorphisms of the affine plane},
    author={J. Xie},
    journal={Ast\'erisque},
    year={2017},
    volume={\textbf{394}},
    pages={vi+110}
}

@incollection{Xie,
    title={Around the dynamical {M}ordell--{L}ang conjecture},
    author={Junyi Xie},
    BOOKTITLE = {Algebraic, Complex, and Arithmetic Dynamics},
    SERIES = {Simons Symposia},
    PAGES = {59--98},
    PUBLISHER = {Springer Cham},
    YEAR = {2026}
}

@article{Bel06,
    title={A generalized {S}kolem--{M}ahler--{L}ech theorem for affine varieties},
    author={J. P. Bell},
    journal={J. London Math. Soc. (2)},
    year={2006},
    volume={\textbf{73}},
    number={2},
    pages={367--379}
}

@article{BGT10,
    title={The dynamical {M}ordell--{L}ang problem for \'etale maps},
    author={J. P. Bell and D. Ghioca and T. J. Tucker},
    journal={Amer. J. Math.},
    volume={\textbf{132}},
    number={6},
    year={2010},
    pages={1655--1675}
}

@article{Mor00,
    title={Arithmetic height functions over finitely generated fields},
    author={A. Moriwaki},
    journal={Invent. Math.},
    volume={\textbf{140}},
    number={1},
    pages={101--142},
    year={2000}
}

@article{Voj21,
    title={Roth's theorem over arithmetic function fields},
    author={P. Vojta},
    journal={Algebra $\&$ Number Theory},
    volume={\textbf{15}},
    number={8},
    pages={1943--2017},
    year={2021}
}

@article{GT09,
    title = {Periodic points, linearizing maps, and the dynamical {M}ordell--{L}ang problem},
    author = {D. Ghioca and T.J. Tucker},
    journal = {J. Number Theory},
    volume = {\textbf{129}},
    number = {6},
    pages = {1392--1403},
    year = {2009}
}

@article{Xie24,
    title={Algebraicity criteria, invariant subvarieties and transcendence problems from arithmetic dynamics},
    author={J. Xie},
    journal={Peking Math. J.},
    volume={\textbf{7}},
    number={1},
    pages={345--398},
    year={2024}
}

@unpublished{Zhong,
    title={Polynomial endomorphisms of $\mathbb{A}^2$ with many periodic curves},
    author={X. Zhong},
    note={arXiv:2508.13873v2}
}

@article{DFR,
    title={On the dynamical {M}anin--{M}umford conjecture for plane polynomial maps},
    author={R. Dujardin and C. Favre and M. Ruggiero},
    journal={J. Eur. Math. Soc. (JEMS)},
    year={2026},
    note={https://doi.org/10.4171/jems/1807}
}

@unpublished{JXZ,
    title={Cyclotomic integral points for affine dynamics},
    author={Z. Ji and J. Xie and G.-R. Zhang},
    note={arXiv:2511.13443v2}
}

@article{BJ00,
    title={Dynamics of regular polynomial endomorphisms of $\mathbb{C}^k$},
    author={E. Bedford and M. Jonsson},
    journal={Amer. J. Math.},
    volume={\textbf{122}},
    number={1},
    pages={153--212},
    year={2000}
}

@unpublished{YZ,
    title={Local height arguments toward the dynamical Mordell--Lang conjecture},
    author={S. Yang and A. Zheng},
    note={arXiv:2605.11676}
}

@misc{Stacks,
    title={The {S}tacks project},
    author={Authors Stacks},
    note={https://stacks.math.columbia.edu},
    year={2026}
}

@book{Rud91,
    title={Functional Analysis},
    author={W. Rudin},
    publisher={McGraw-Hill, Inc.},
    year={1991},
    series={Internat. Ser. Pure Appl. Math.},
    address={New York},
    note={2nd ed.}
}

\address{Beijing International Center for Mathematical Research, Peking University, Beijing 100871, China}

\email{ys-yx@pku.edu.cn}

~

\address{Beijing International Center for Mathematical Research, Peking University, Beijing 100871, China}

\email{zay@pku.edu.cn}

\end{spacing}
\end{document}